\newtheorem{theorem}{Theorem}[section]
\newtheorem{lemma}[theorem]{Lemma}
\theoremstyle{definition}
\newtheorem{definition}[theorem]{Definition}
\theoremstyle{remark}
\newtheorem{remark}[theorem]{Remark}
\numberwithin{equation}{section}
\newcommand{\R}{\mathbb{R}}
\newcommand{\ip}[2]{\left\langle #1, #2\right\rangle}
\newcommand{\pf}[2]{\frac{\partial #1}{\partial #2}}
\newcommand{\id}{\textup{id}}
\newcommand{\abs}[1]{\left|#1\right|}
\numberwithin{equation}{section}
\begin{document}
\title[]{Self-Expanders to Inverse Curvature Flows by Homogeneous Functions}
\author[Aaron Chow]{Tsz-Kiu Aaron Chow}
\author[Ka-Wing Chow]{Ka-Wing Chow}
\author[Frederick Fong]{Frederick Tsz-Ho Fong}
\address{Department of Mathematics, Hong Kong University of Science and Technology, Clear Water Bay, Kowloon, Hong Kong}

\email{zzhoual@connect.ust.hk}
\email{kwchowac@connect.ust.hk}
\email{frederick.fong@ust.hk}

\maketitle
\begin{abstract}
In this paper, we study self-expanding solutions to a large class of parabolic inverse curvature flows by homogeneous symmetric functions of principal curvatures in Euclidean spaces. These flows include the inverse mean curvature flow and many nonlinear flows in the literature.

We first show that the only compact self-expanders to any of these flows are round spheres. Secondly, we show that complete non-compact self-expanders to any of these flows with asymptotically cylindrical ends must be rotationally symmetric. Thirdly, we show that when such a flow is uniformly parabolic, there exist complete rotationally symmetric self-expanders which are asymptotic to two round cylinders with different radii. These extend some earlier results in \cite{DLW, DFL, KLP} to a wider class of curvature flows.
\end{abstract}

\section{Introduction}

In this article, we study a class of parabolic inverse curvature flows on Euclidean hypersurfaces $\Sigma^{n\geq2} \subset \R^{n+1}$ of the general form:
\begin{equation}
\label{eq:FlowInverse}
\left(\pf{F}{t}\right)^\perp = -\frac{1}{\rho}\nu.	
\end{equation}
Here $\nu$ is the evolving Gauss map, and $\rho(\lambda_1,\cdots,\lambda_n) : \Gamma \subset \R^n \to \R_+$ is a positive, homogeneous of degree $1$, symmetric and $C^1$ function of principal curvatures defined on an open cone $\Gamma \subset \R^n$. By homogeneous of degree $1$ we mean for any $(\lambda_1, \ldots, \lambda_n) \in \Gamma$ and $c > 0$, we have:
\[\rho(c\lambda_1, \cdots, c\lambda_n) = c\rho(\lambda_1, \cdots, \lambda_n).\]
We will call \eqref{eq:FlowInverse} the \emph{inverse $\rho$-flow}. In order for the flow \eqref{eq:FlowInverse} to have short-time existence, we require $\rho$ to satisfy parabolic conditions. We say the inverse $\rho$-flow is \emph{parabolic} if:
\[\pf{\rho}{\lambda_i} > 0 \quad \text{ on } \Gamma\]
for all $i = 1, \cdots, n$. It is a large class of flows containing many well-known flows in the literature, including:
\begin{itemize}
\item $\rho = H = \sum_i \lambda_i$: It is the well-known \emph{inverse mean curvature flow}. Here the function $\rho$ is defined on the mean-convex cone $\Gamma = \{\lambda_1 + \cdots + \lambda_n > 0\}$. It is parabolic since $\pf{\rho}{\lambda_i} = 1$.
\item $\rho = \frac{\sigma_{k+1}}{\sigma_k}$: Here $\sigma_k$ denotes the $k$-th symmetric polynomial of principal curvatures $\sum_{1 \leq i_1 < \cdots < i_k \leq n} \lambda_{i_1} \cdots \lambda_{i_k}$. It is called the \emph{inverse $Q_k$-flow} by some authors. Here $\rho$ is positive on the cone $\Gamma = \{\sigma_{k+1} > 0\}$. The flow is parabolic on $\Gamma$ by the Newton-Maclaurin's inequality.
\item $\rho = \left(\frac{\sigma_j}{\sigma_i}\right)^{\frac{1}{j-i}}$, where $j > i$, defined on the cone $\Gamma = \{\sigma_j > 0\}$. It is parabolic on $\Gamma$ again by the Newton-Maclaurin's inequality.
\item $\rho = \sigma_k^{1/k}$, where $1 \leq k \leq n$, defined on the cone 
\[\Gamma = \{\sigma_k > 0\} \cap \underbrace{\left(\bigcap_{i=1}^n \{\sigma_{k-1}(\lambda_1,\cdots,\hat{\lambda}_i,\cdots,\lambda_n) > 0\}\right)}_{\text{to guarantee }\pf{\rho}{\lambda_i} > 0 \text{ for all } i}.\]
Here $(\lambda_1, \cdots, \hat{\lambda}_i, \cdots, \lambda_n)$ means $(\lambda_1, \cdots, \lambda_n)$ with $\hat{\lambda}_i$ removed.
\item $\rho = \left(\sum_{i=1}\lambda_i^p\right)^{1/p}$ where $p > 1$, defined on the convex cone $\Gamma = \{\lambda_i > 0 \text{ for any } i\}$. It is parabolic on $\Gamma$ by direct computation of $\pf{\rho}{\lambda_i}$.
\item $\rho(\lambda_1, \lambda_2) = \frac{(\lambda_1+\lambda_2)^3}{(\lambda_1-\lambda_2)^2}$: it is a less trivial example which is positive and satisfies $\partial_i \rho > 0$ for any $i = 1, 2$ on the cone
\[\Gamma = \{(\lambda_1, \lambda_2) : \lambda_1 + \lambda_2 > 0 \text{ and } (\lambda_1 - 5\lambda_2 > 0 \text{ or } 5\lambda_1 - \lambda_2 < 0)\}.\]
This $\rho$ function is well-defined for surfaces close to round cylinders, but is undefined for the round sphere. Note that unlike the previous examples, the cone $\Gamma$ on which $\rho$ is parabolic does not contain the whole convex cone $\{\lambda_i > 0 \text{ for any } i\}$.
\end{itemize}

The study of inverse curvature flows dates back to the works by Gerhardt \cite{G} and Urbas \cite{U}, who considered parabolic flows by curvature functions $\rho$'s which are weakly concave, and independently proved that if a compact (without boundary) hypersurface $\Sigma^n$ is initially star-shaped, then the flow evolves the hypersurface into a round sphere after rescaling.

In the case $\rho = H$, the flow \eqref{eq:FlowInverse} becomes the well-known inverse mean curvature flow. This flow appears in various contexts ranging from geometric inequalities to general relativity. In \cite{HI01}, Huisken and Ilmanen used the monotonicity of the Hawking mass along the inverse mean curvature flow to prove the Riemannian Penrose inequality. Many monotone quantities along various inverse curvature flows were observed in the literature including Bray \cite{B}, Brendle-Hung-Wang \cite{BHW}, Guan-Li \cite{GL}, Kwong-Miao \cite{KM14,KM15}, Li-Wei \cite{LW}, Streets \cite{S}  (this list is far from being complete), and many useful geometric inequalities are derived using these monotone quantities.

\emph{Self-expanders} to the inverse $\rho$-flow \eqref{eq:FlowInverse} are defined by the equation:
\begin{equation}
\label{eq:SelfSimilarInverse}
-\frac{1}{\rho} = \mu\,\langle F, \nu\rangle.
\end{equation}
where $\mu > 0$. If $F$ satisfies \eqref{eq:SelfSimilarInverse}, it can be easily verified, using the homogeneity of $\rho$, that $F_t := e^{\mu t}F$ is the solution of the inverse $\rho$-flow starting from $F$. They are sometimes stationary along some monotone quantities mentioned above, and can be regarded as fixed points of the flow modulo rescaling. Therefore, it is an interesting question to search for examples of self-expanders, and study their rigidity and uniqueness problems. 

Along the inverse $\rho$-flow, round spheres (provided that $(1,\cdots,1) \in \Gamma$), and round cylinders (provided that $(1,\cdots,1,0) \in \Gamma$) are examples of self-expanders. For the inverse mean curvature flow ($\rho = H$), it was shown by Drugan, Lee, and Wheeler in \cite{DLW} that the only compact self-expanders are round spheres. Their proof used Hsiung-Minkowski's integral formulas on symmetric polynomials $\sigma_k$'s. It was subsequently generalized by Kwong, Lee, and Pyo in \cite{KLP} to a wider class of flows, including the inverse $Q_k$-flow, again using Hsiung-Minkowski's integral formulas. The first main result (Theorem \ref{thm:UniquenessCompact}) in this article is to prove that round spheres are the only compact self-expanders to general parabolic inverse $\rho$-flows, extending results in \cite{DLW, KLP} to a much wider class of flows. Our approach is different from those in \cite{DLW,KLP}. Instead, we will show that any such compact self-expander must be rotationally symmetric about \emph{any} axis passing through the origin, using similar ideas appeared in \cite{DFL} by Drugan, the third-named author, and Lee on asymptotically cylindrical inverse mean curvature self-expanders.

Since we have uniqueness for compact self-expanders, it is therefore natural to ask whether round cylinders are the only complete non-compact self-expanders. For the inverse mean curvature flow, it has already been known to be not true. There are a number of one-ended complete self-expanders to the inverse mean curvature flow constructed by Huisken-Ilmanen \cite{HI97} using phase-portrait analysis. Recently, there are two-ended complete self-expanders to the same flow constructed by Drugan, Lee, and Wheeler in \cite{DLW}, which resemble the shape of infinite wine bottles. While there is no hope of proving uniqueness of non-compact self-expanders to the inverse mean curvature flow, the rotational symmetry of such a self-expander was proved to be very rigid. It was proved in \cite{DFL} by Drugan, the third-named author, and Lee that if a complete non-compact self-expander to the inverse mean curvature flow is asymptotically cylindrical, then it must be rotationally symmetric about the axis of the cylinder. Our second main result is to extend these rotational rigidity results to general parabolic inverse $\rho$-flows (Theorem \ref{thm:RigidityNoncompact}).

Our third main result (Theorem \ref{thm:InfiniteBottle}) constructs complete non-compact self-expanders to \emph{uniformly} parabolic inverse $\rho$-flows. Their shapes resemble the \emph{infinite bottle} self-expanders to the inverse mean curvature flow constructed in \cite{DLW}. They are rotationally symmetric, two-ended, and are topologically equivalent to cylinders. However, they are \emph{geometrically} different from round cylinders, but rather are bounded between two cylinders of different radii (see Figure \ref{fig:InfiniteBottle}). This proves that, even within the same topological type, complete non-compact self-expanders to any of these uniformly parabolic inverse $\rho$-flows are not unique.

Here we state our results precisely:
\begin{theorem}[Main Theorem]
Consider the inverse $\rho$-flow \eqref{eq:FlowInverse} by a degree $1$ homogeneous symmetric $C^1$ function of principal curvatures $\rho : \Gamma \to \R_+$ defined on an open cone $\Gamma \subset \R^n$ and $\frac{\partial\rho}{\partial\lambda_i} > 0$ on $\Gamma$ for any $i$. Then, we have the following uniqueness, rigidity, and existence results:
\begin{enumerate}
\item Suppose $(1, \cdots, 1) \in \Gamma$, the only compact self-expanders to the inverse $\rho$-flow \eqref{eq:FlowInverse} must be round spheres (see Theorem \ref{thm:UniquenessCompact}).
\item Suppose $(1,\cdots,1,0) \in \Gamma$, and $\Sigma$ is a complete non-compact self-expander to the inverse $\rho$-flow \eqref{eq:FlowInverse} such that either
	\begin{itemize}
	\item[(i)] $\Sigma$ has exactly one end which is $C^2$-asymptotic to a round cylinder (in the sense of Definition \ref{def:ACEnds}), or
	\item[(ii)] $\Sigma$ has exactly two ends which are $C^2$-asymptotic to two co-axial round cylinders (in the sense of Definition \ref{def:ACEnds}), 
	\end{itemize}
then $\Sigma$ must be rotationally symmetric about the axis of the cylinder(s) (see Theorem \ref{thm:RigidityNoncompact}).
\item Suppose $\rho$ and $\Gamma$ satisfy the following conditions
\begin{itemize}
\item[(i)] $(1, \cdots, 1, \xi) \in \Gamma$ for any $\xi \in (-\alpha, \beta)$, where $\alpha > 0$ and $\beta > 1$; and
\item[(ii)] $\rho(1,\cdots,1,\xi) \to 0$ as $\xi \to -\alpha$; and
\item[(iii)] there exists a constant $C > 0$ such that $C \geq \frac{\partial\rho}{\partial\lambda_n}(1,\cdots,1,\xi) > 0$ for any $\xi \in (-\alpha,\beta)$,
\end{itemize}
then there exists a complete two-ended rotationally symmetric self-expander $\Sigma$ to the inverse $\rho$-flow \eqref{eq:FlowInverse}, which is bounded between two co-axial cylinders of different radii, and $C^2$-asymptotic to these two cylinders at both ends (see Theorem \ref{thm:InfiniteBottle}).
\end{enumerate}

\end{theorem}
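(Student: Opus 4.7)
The proof splits naturally into three parts. Parts (1) and (2) are rigidity results proved by an Alexandrov-style moving-plane reflection adapted to the self-expander equation; part (3) is an existence result via ODE/phase-plane analysis of the rotationally symmetric reduction. The unifying observation behind (1) and (2) is that, because $\rho$ is a symmetric function of the principal curvatures, the equation \eqref{eq:SelfSimilarInverse} is invariant under the action of $O(n+1)$ on $\R^{n+1}$; in particular, the reflection of a self-expander through any hyperplane \emph{containing the origin} is again a self-expander, which singles out hyperplanes through the origin as the natural objects to slide.

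For part (1), I would fix a unit vector $v\in S^n$, set $P_t=\{\langle x,v\rangle=t\}$, and start at $t$ large enough that $\Sigma\subset\{\langle x,v\rangle<t\}$. Sliding $P_t$ downward and reflecting the upper cap $\Sigma_t^+$ across $P_t$ to $\widetilde\Sigma_t^+$, a direct computation shows that $\widetilde\Sigma_t^+$ satisfies a \emph{modified} self-expander equation, differing from \eqref{eq:SelfSimilarInverse} by a correction term proportional to $2t\mu\langle v,\widetilde\nu\rangle$ which vanishes at $t=0$. At the critical parameter $t^*$ for the moving plane, I would compare $\widetilde\Sigma_{t^*}^+$ with the lower part $\Sigma_{t^*}^-$ via a graph representation near the first contact point: parabolicity $\partial\rho/\partial\lambda_i>0$ makes the difference equation uniformly elliptic, and a first-contact analysis together with the fact that a surface symmetric about $P_t$ forces $t\langle v,\nu\rangle\equiv 0$ by the equation itself rules out $t^*\ne 0$. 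At $t^*=0$, the reflected surface is a genuine self-expander and the strong maximum principle gives $\widetilde\Sigma_0^+=\Sigma\cap\{\langle x,v\rangle<0\}$. Varying $v$ over $S^n$ shows $\Sigma$ is $O(n+1)$-invariant about the origin, hence a round sphere centered at the origin. Part (2) follows the same template, now with hyperplanes containing the axis of the asymptotic cylinder(s); such an axis must pass through the origin since a direct calculation shows that only cylinders centered at the origin satisfy \eqref{eq:SelfSimilarInverse}. The $C^2$-asymptotic condition from Definition \ref{def:ACEnds} provides the decay needed to initiate the moving plane at infinity and to rule out escape of the reflected surface near infinity.

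For part (3), I would impose the rotationally symmetric ansatz about the $x_{n+1}$-axis and parametrize the profile curve by arclength: $(r(s),z(s))$ with $r'=\cos\theta$, $z'=\sin\theta$. The principal curvatures are $\kappa_P=\theta'$ in the profile direction and $\kappa_R=\sin\theta/r$ with multiplicity $n-1$ in the rotational directions, so by the degree-one homogeneity of $\rho$ the equation \eqref{eq:SelfSimilarInverse} reduces to an autonomous first-order system in $(r,z,\theta)$ parametrized by $s$. A cylinder of radius $r_0$ corresponds to the equilibrium $\theta\equiv\pi/2$, $r\equiv r_0$; conditions (i) and (ii) ensure two such equilibria at different radii (the second being forced by $\rho(1,\ldots,1,\xi)\to 0$ as $\xi\to-\alpha$), and condition (iii) provides the uniform $C^1$ control on the vector field that keeps trajectories from blowing up in finite $s$. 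A phase-plane analysis in the spirit of \cite{HI97, DLW} then produces a heteroclinic-type trajectory connecting the two cylinder equilibria, which corresponds to the desired two-ended rotationally symmetric self-expander $C^2$-asymptotic to two coaxial cylinders of different radii.

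The main technical obstacle, in my view, is the moving-plane argument in part (1): unlike \cite{DFL}, where the symmetry axis is dictated by the asymptotic cylinder, in the compact case there is no preferred axis, so one must establish symmetry about every hyperplane through the origin. The key subtlety is that reflection across a non-central hyperplane destroys the self-expander equation and introduces the correction term $2t\mu\langle v,\widetilde\nu\rangle$; exploiting its sign, together with the sign constraint $\langle F,\nu\rangle<0$ forced by \eqref{eq:SelfSimilarInverse}, is what closes the argument and forces the symmetry plane to pass through the origin.
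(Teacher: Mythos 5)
Your plan for parts (1) and (2) is a moving-plane argument, which is a genuinely different route from the paper's: the paper instead shows that $f_R=\langle R,\nu\rangle$ and $\varphi=-1/\rho$ both lie in the kernel of the linearized operator $L_\varphi$ (this is where degree $-1$ is special), applies the strong maximum principle to the quotient $f_R/\varphi$ (compact case) or $f_R/\langle F,\nu\rangle$ (asymptotically cylindrical case), and uses $\oint_\Sigma\langle R,\nu\rangle\,d\Sigma=0$, respectively the decay $f_R\to 0$ at infinity, to force $f_R\equiv 0$ for every admissible rotation field $R$. As written, your reflection argument has a genuine gap at its central step. You correctly compute that reflecting across $P_t=\{\langle x,v\rangle=t\}$ changes \eqref{eq:SelfSimilarInverse} by the term $2t\mu\langle v,\tilde\nu\rangle$, but this term has no definite sign: $\langle v,\tilde\nu\rangle$ changes sign over the reflected cap of any compact hypersurface. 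The Alexandrov first-contact analysis requires the reflected cap and the un-reflected part to satisfy the \emph{same} elliptic equation (or equations differing by a term of controlled sign), so that their difference, written as graphs near the touching point, obeys a linear elliptic inequality to which Hopf's lemma applies; here the difference equation carries the inhomogeneity $2t^*\mu\langle v,\tilde\nu\rangle$ of indeterminate sign, and neither the interior-tangency nor the boundary-tangency case closes. Your observation that an actual reflection symmetry about $P_{t^*}$ forces $t^*=0$ is correct, but the argument never reaches a symmetry configuration. The same obstruction reappears in part (2): to prove symmetry about a hyperplane containing the axis you must slide parallel copies of it, and every intermediate plane misses the origin. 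Note also that moving planes require embeddedness, whereas the paper's quotient argument is an intrinsic maximum principle on the immersed $\Sigma$.

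For part (3) your phase-plane outline is in the spirit of the paper (and of \cite{HI97,DLW}), but two points need repair. First, \emph{every} radius gives a cylinder solution, so the cylinders form a continuum of invariant orbits of the (non-autonomous in $(r,\theta)$, since $\langle F,\nu\rangle$ involves the height) reduced system, rather than two isolated equilibria selected by hypotheses (i)--(ii); the two asymptotic radii are produced by the trajectory itself — the paper shows the profile $r(h)$ is strictly increasing with a unique inflection point, bounded between two positive constants — not by locating two rest points. Second, and more seriously, your sketch omits the main new difficulty relative to $\rho=H$: the reduced ODE \eqref{eq:ODE} involves $\hat{\rho}^{-1}\bigl((1-\dot{r}Q)\hat{\rho}(0)\bigr)$ with $\hat{\rho}(\xi)=\rho(1,\dots,1,\xi)$, and for many admissible $\rho$ (e.g.\ $\sigma_{k+1}/\sigma_k$) the range of $\hat{\rho}$ is a bounded interval $(0,\gamma)$. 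One must prove that $(1-\dot{r}Q)\hat{\rho}(0)$ stays uniformly away from $0$ and $\gamma$ along the flow — this is exactly where hypotheses (ii), (iii) and $\beta>1$ enter, via the monotonicity of $\dot{r}Q$ and mean-value-theorem estimates. Without this, the right-hand side of the ODE can cease to be defined in finite $h$ and global existence fails.
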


\vskip 0.5cm
\noindent\textbf{Acknowledgement:} Aaron Chow is partially supported by the HKUST Undergraduate Research Opportunity Program (UROP). Ka-Wing Chow is partially supported by the HKUST postgraduate studentship. Frederick Fong is partially supported by the Hong Kong RGC Early Career Grant \#26301316 and the Hong Kong RGC General Research Grant \#16302417. He would like thank Hojoo Lee for numerous and helpful discussions on the inverse mean curvature flow, especially on his recent joint paper \cite{DLW}.

The authors would like to thank the anonymous referee for his/her very careful reading and insightful comments, and for his/her valuable suggestions to improve the exposition of the previous version. We are grateful that he/she pointed out a careless error in \eqref{eq:(r'Q)'} of the previous version and suggested a way to fix it.

\section{Preliminaries}
Let $\Sigma^n$ be a complete manifold of dimension $n \geq 2$ without boundary. Suppose $F : \Sigma^n \to \mathbb{R}^{n+1}$ is an immersion of $\Sigma$ into $\R^{n+1}$. Let $\nu$ be the Gauss map of $F$ which is chosen to be inward-pointing when $\Sigma$ is closed. Under local coordinates $(u_1, \ldots, u_n)$ of $\Sigma$, we denote
\[g_{ij} = \ip{\pf{F}{u_i}}{\pf{F}{u_j}}\]
to be the first fundamental form of $\Sigma$ and $g^{ij}$ be the matrix inverse of $[g_{ij}]$. We adopt the following convention of the second fundamental form, shape operator and mean curvature:
\begin{align*}
h_{ij} & = \ip{\frac{\partial^2 F}{\partial u_i \partial u_j}}{\nu}\\
h_i^j & = g^{jk}h_{ik}\\
H & = h_i^i = g^{ij}h_{ij}
\end{align*}
The principal curvatures $\lambda_1, \ldots, \lambda_n$ are the eigenvalues of the matrix $[h_i^j]$.

Now we consider a more general type of curvature flows by homogeneous functions of any degree. Let $\varphi : \Gamma \subset \R^n \to \R$ be symmetric function of the principal curvatures of degree $\alpha$ defined on an open cone $\Gamma$ in $\R^n$, meaning that
\[\varphi(c\lambda_1, \ldots, c\lambda_n) = c^\alpha \varphi(\lambda_1, \ldots, \lambda_n)\]
for any $(\lambda_1, \ldots, \lambda_n) \in \Gamma$ and $c > 0$. We consider the curvature flow:
\begin{equation}
\label{eq:Flow}
\left(\pf{F_t}{t}\right)^\perp = \varphi_t \nu_t
\end{equation}
with initial condition $F_0 = F$. Here $\varphi_t := \varphi(\lambda_i(t))$ where $\lambda_i(t)$'s are the principal curvatures of $F_t$, and $\nu_t := \nu(F_t)$ is the Gauss map of $F_t$.

When $\varphi = -\frac{1}{\rho}$ where $\rho$ is homogeneous of degree $1$, then \eqref{eq:Flow} is a flow by a homogeneous function of degree $-1$. Generally, the notions of self-similar solutions are defined as follows:

\begin{definition}[Self-Similar Solutions]
We say $F$ is a \emph{self-similar solution} to \eqref{eq:Flow} if there exists a constant $\mu \not= 0$ such that
\begin{equation}
\label{eq:SelfSimilar}
\varphi = \mu\ip{F}{\nu}.	
\end{equation}

Furthermore,
\begin{itemize}
\item If $\mu > 0$, we say $F$ is a \emph{self-expander} to \eqref{eq:Flow}. 
\item If $\mu < 0$, we say $F$ is a \emph{self-shrinker} to \eqref{eq:Flow}.
\end{itemize}
\end{definition}

When $\deg \varphi = \alpha \not= -1$, then $F$ satisfies \eqref{eq:SelfSimilar} if and only if
\[F_t = \left[1 + \mu(\alpha + 1)t\right]^{\frac{1}{\alpha+1}}F\]
satisfies \eqref{eq:Flow}. When $\deg \varphi = -1$, then $F$ satisfies \eqref{eq:SelfSimilar} if and only if $F_t = e^{\mu t}F$ satisfies \eqref{eq:Flow}. Therefore, the case $\deg \varphi = -1$ is somewhat special, and deserves special attentions.

When $\varphi = H$, the flow \eqref{eq:Flow} is the well-known mean curvature flow. Huisken proved in \cite{H84} that any convex closed hypersurface evolves under the flow to a round sphere. It was later generalized to flows with $\deg\varphi = 1$ by Andrews \cite{A94} (see also \cite{ALM, AMZ}). The study of self-similar solutions is of intensive interest in the literature, since it is important for understanding singularity formation (see e.g. Huisken \cite{H86}, Colding-Minicozzi \cite{CM12}, et. al). In \cite{H86}, Huisken proved that the only mean-convex compact self-shrinkers are round spheres. The result was later generalized by McCoy \cite{Mc} to a wider class of flows with $\deg \varphi = 1$. In $\R^3$ when $\Sigma^2$ has genus $0$, it was spectacularly proved by Brendle \cite{B16} that the only closed self-shrinker is the round sphere.

Non-compact self-similar solutions in the case $\deg \varphi = 1$ are also very rigid. Wang established in \cite{W14,W16} uniqueness of asymptotically conical and cylindrical mean curvature self-shrinkers (see also a recent generalization to flows with $\deg\varphi = 1$ by Guo \cite{Gu}). Some rotationally symmetric examples of asymptotically conical self-shrinkers and self-expanders are constructed by, for instance, Kleene-M\o ller \cite{KMo14}, Angenent-Chopp-Ilmanen \cite{ACI}, Helmansdorffer \cite{Hel}, Guo \cite{G}. Rotational rigidity of asymptotically conical mean curvature self-expanders was established in \cite{FM} by the third-named author and McGrath.

Similar to works about proving rotational symmetry of self-similar solutions, such as \cite{B13,B14,C,CF,Hf,BL,DFL,FM}, we will derive some relevant Jacobi-type equations and apply the maximum principle on them. We need the following well-known variational formulae, whose proofs can be found in, for instance, Andrews' \cite{A94} and Huisken-Polden's \cite{HP} works.

\begin{lemma}[c.f. \cite{A94}, \cite{HP}]
\label{lma:Variations}
Suppose $F_s : \Sigma^n \to \R^{n+1}$ is a smooth family of hypersurfaces satisfying $\displaystyle{\frac{\partial F_s}{\partial s} = f_s \nu_s}$, where each $f_s : \Sigma \to \R$ is a smooth scalar function, and $\nu_s$ is the Gauss map of $F_s$. Let $\varphi(\{h_j^i\})$ be a scalar function of the shape operator. Then, we have the following evolution equations:
\begin{align}
\label{eq:g} \frac{\partial}{\partial s} g_{ij} & = -2fh_{ij}\\
\label{eq:nu} \frac{\partial \nu}{\partial s} & = -\nabla f\\
\label{eq:h} \frac{\partial}{\partial s}h_{ij} & = \nabla_i \nabla_j f - fh_{ik}h_j^k\\
\label{eq:varphi} \frac{\partial \varphi}{\partial s} & =  g^{il}\pf{\varphi}{h_j^i}\nabla_l \nabla_j f + f h_p^i h_j^p \pf{\varphi}{h_j^i},
\end{align}
where all geometric quantities above are with respect to $F_s$.
\end{lemma}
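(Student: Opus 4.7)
The plan is to derive all four identities by direct computation in a local coordinate chart $(u_1,\ldots,u_n)$ on $\Sigma$, using only the ambient flatness of $\R^{n+1}$, the Weingarten relation $\partial_i \nu = -h_i^k \partial_k F$, and the fact that the spatial partials $\partial_{u_i}$ commute with $\partial_s$. The first three identities are classical one-step calculations; the fourth will follow from the third by the chain rule plus some careful index bookkeeping.

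For \eqref{eq:g}, I would differentiate $g_{ij} = \ip{\partial_i F}{\partial_j F}$ in $s$, interchange $\partial_s$ and $\partial_i$, and substitute $\partial_s F = f\nu$. The tangential orthogonality $\nu \perp \partial_j F$ kills the $(\partial_i f)\nu$ and $(\partial_j f)\nu$ contributions, while the surviving $f\,\partial_i\nu$ and $f\,\partial_j\nu$ each yield $-fh_{ij}$ via Weingarten. For \eqref{eq:nu}, the unit-length constraint $|\nu|^2 = 1$ forces $\partial_s\nu$ to be tangential; writing $\partial_s\nu = a^k\partial_k F$ and differentiating $\ip{\nu}{\partial_j F}=0$ in $s$ gives the linear system $a^k g_{kj} = -\partial_j f$, whose unique solution is $a^k = -g^{kj}\partial_j f$, i.e. $-\nabla f$.

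For \eqref{eq:h}, I would start from $h_{ij} = -\ip{\partial_i F}{\partial_j\nu}$, differentiate in $s$, commute $\partial_s$ past $\partial_i$ and $\partial_j$, and substitute $\partial_s F = f\nu$ together with $\partial_s\nu = -\nabla f$ from \eqref{eq:nu}. The algebraic terms reduce via Weingarten to $-fh_{ik}h_j^k$, while the remaining second-order pieces combine into the coordinate Hessian of $f$; converting this into the covariant Hessian $\nabla_i\nabla_j f$ requires only the Christoffel correction, which is valid since $f$ is a scalar. The Codazzi symmetry $\nabla_i h_{jk} = \nabla_j h_{ik}$ is not needed here but would be the tool of choice if one instead differentiated $h_{ij} = \ip{\partial_i\partial_j F}{\nu}$.

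For \eqref{eq:varphi}, the chain rule gives $\partial_s\varphi = \frac{\partial\varphi}{\partial h_i^j}\partial_s h_i^j$, so it suffices to compute $\partial_s h_i^j$ from $h_i^j = g^{jk}h_{ik}$. Using $\partial_s g^{jk} = 2fh^{jk}$ (derived from \eqref{eq:g}) together with \eqref{eq:h} gives three tangential-quadratic terms plus a Hessian piece. The main obstacle, minor but easy to mishandle, is recognizing that the surviving quadratic contributions $h^{jk}h_{ik}$ and $g^{jk}h_{il}h_k^l$ are both equal to $h_i^q h_q^j$, so that the net coefficient of $f h_i^q h_q^j$ comes out to $+1$ rather than $+2$ or $0$. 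After this cancellation one has
\[\partial_s h_i^j = f h_i^q h_q^j + g^{jk}\nabla_i\nabla_k f,\]
and contracting against $\frac{\partial\varphi}{\partial h_i^j}$ and relabeling dummy indices produces the stated formula.
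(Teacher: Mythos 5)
Your proposal is correct, and for \eqref{eq:varphi} — the only identity the paper proves in detail rather than citing to \cite{A94,HP} — it follows exactly the paper's route: apply the chain rule to $\varphi(h_i^j)$ with $h_i^j=g^{jk}h_{ik}$, use $\partial_s g^{jk}=2fh^{jk}$ together with \eqref{eq:h}, and observe that the two quadratic terms $2fh_i^qh_q^j$ and $-fh_i^qh_q^j$ combine to give the coefficient $+1$. Your sketches of \eqref{eq:g}--\eqref{eq:h} are the standard computations and are consistent with the paper's conventions.
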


\begin{proof}
Detail computations can be found in e.g. \cite[Theorem 3.7]{A94} or \cite[Lemma 7.6]{HP}. For readers' convenience, we provide the proof of \eqref{eq:varphi}. Using \eqref{eq:g} and \eqref{eq:h}, we have
\begin{align*}
\frac{\partial \varphi}{\partial s} &= \frac{\partial \varphi}{\partial h_j^i} \frac{\partial}{\partial s}(g^{il}h_{lj}) \\
&=  \frac{\partial \varphi}{\partial h_j^i}\big(2fg^{ip}g^{ln}h_{pn}h_{lj} + g^{il}\nabla_l\nabla_j f -fg^{il}h_{lp}h_j^p\big) \\
&=   g^{il}\frac{\partial \varphi}{\partial h_j^i}\nabla_l\nabla_j f + fh_p^i h_j^p  \frac{\partial \varphi}{\partial h_j^i}
\end{align*}
as desired.
\end{proof}

\section{Eigenfunctions of Stability Operator}
Next we derive several Jacobi-type equations of various geometric quantities that measure rotational or translational invariances for the self-expanders. The approach of proving them is similar to that in \cite{DFL,FM}. The major difference is that the Laplacian operators $\Delta$ in \cite{DFL,FM} are replaced by a more general elliptic operator $g^{il}\pf{\varphi}{h_j^i}\nabla_l \nabla_j$.
\begin{lemma}
\label{lma:L(varphi)}
Let $F : \Sigma^n \to \R^{n+1}$ be a self-similar solution to \eqref{eq:Flow}. Then, $\varphi$ satisfies the following second-order equation:
\begin{equation}
\label{eq:L(varphi)}
g^{il}\pf{\varphi}{h_j^i}\nabla_l \nabla_j \varphi + \mu\ip{F}{\nabla\varphi} + \left(\pf{\varphi}{h_j^i}h_p^i h_j^p - \mu\right)\varphi = -(\alpha + 1)\mu\varphi.
\end{equation}
\end{lemma}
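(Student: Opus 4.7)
The plan is to differentiate the self-similar equation $\varphi = \mu\langle F,\nu\rangle$ twice and contract with the symmetric tensor $g^{il}\pf{\varphi}{h_j^i}$. Using the Weingarten relation $\nabla_k\nu = -h_k^j e_j$ together with $\langle e_k,\nu\rangle = 0$, a standard computation gives
\[\nabla_j\varphi = -\mu\, h_{jk}(F^T)^k,\]
where $F^T$ denotes the tangential projection of the position vector $F$. To differentiate once more I will derive the auxiliary identity
\[\nabla_l(F^T)^k = \delta_l^k + \langle F,\nu\rangle h_l^k,\]
which follows from the decomposition $F = F^T + \langle F,\nu\rangle\nu$ and the Gauss and Weingarten formulae. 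Combining these yields
\[\nabla_l\nabla_j\varphi = -\mu (\nabla_l h_{jk})(F^T)^k - \mu\, h_{jl} - \mu\langle F,\nu\rangle h_{jk}h_l^k.\]

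Contracting with $g^{il}\pf{\varphi}{h_j^i}$, the three resulting terms can be simplified separately. The first term, after applying Codazzi's identity $\nabla_l h_{jk} = \nabla_k h_{jl}$ and the chain rule $\pf{\varphi}{h_j^i}\nabla_k h_j^i = \nabla_k\varphi$, becomes $-\mu\langle F,\nabla\varphi\rangle$. The middle term equals $-\mu\pf{\varphi}{h_j^i} h_j^i = -\alpha\mu\varphi$ by Euler's identity for the degree-$\alpha$ homogeneous function $\varphi$. For the last term, the identification $g^{il} h_{jk} h_l^k = h_j^p h_p^i$ (the square of the shape operator) together with the substitution $\mu\langle F,\nu\rangle = \varphi$ turns it into $-\varphi\,\pf{\varphi}{h_j^i} h_p^i h_j^p$. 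Assembling these and then moving a copy of $\mu\varphi$ across produces the stated identity, since $-\alpha\mu\varphi - \mu\varphi = -(\alpha+1)\mu\varphi$.

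The main obstacle is purely the tensorial bookkeeping, especially in the last term where the indices of $g^{il}$, $h_l^k$, $h_{jk}$, and $\pf{\varphi}{h_j^i}$ must be matched correctly to recognize the curvature invariant $\pf{\varphi}{h_j^i} h_p^i h_j^p$. As a sanity check, one may alternatively apply the variational formula \eqref{eq:varphi} with $f = \varphi$ under the pure normal flow $\partial F/\partial t = \varphi\nu$ and compare it against the homothety $F_t = \lambda(t)F$: the latter gives $\partial_t\varphi = -\alpha\mu\varphi$ by homogeneity of $\varphi$ in the principal curvatures, while the two evolutions differ by the tangential reparametrization whose velocity is $\mu F^T$, contributing an extra term $\mu\langle F,\nabla\varphi\rangle$. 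Equating the two expressions recovers the same equation.
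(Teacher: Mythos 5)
Your proof is correct, and it takes a genuinely different route from the paper. The paper derives \eqref{eq:L(varphi)} dynamically: it runs the actual self-similar flow $F_t=\psi_t F$, reparametrizes it into a normal flow, applies the variational formula \eqref{eq:varphi} with $f=\varphi$ to compute $\partial_t\varphi$ one way, and uses homogeneity plus \eqref{eq:nu} to compute it another way; equating the two gives the identity. (Your closing ``sanity check'' is essentially this argument.) Your main argument is instead a static, pointwise computation: differentiating $\varphi=\mu\ip{F}{\nu}$ twice via Weingarten and the identity $\nabla_l(F^T)^k=\delta_l^k+\ip{F}{\nu}h_l^k$, then contracting with $g^{il}\pf{\varphi}{h_j^i}$ so that Codazzi plus the chain rule produce the drift term $-\mu\ip{F}{\nabla\varphi}$, Euler's identity $\pf{\varphi}{h_j^i}h_j^i=\alpha\varphi$ produces the degree-dependent term $-\alpha\mu\varphi$, and the Weingarten term quadratic in $h$ produces $-\varphi\,\pf{\varphi}{h_j^i}h_p^ih_j^p$ after substituting $\mu\ip{F}{\nu}=\varphi$; all three terms check out and the final rearrangement $-\alpha\mu\varphi-\mu\varphi=-(\alpha+1)\mu\varphi$ matches \eqref{eq:L(varphi)} exactly. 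What each approach buys: the paper's method is uniform across Lemmas \ref{lma:L(varphi)}--\ref{lma:L(f_R)} and \eqref{eq:L(V)} (one chooses the appropriate one-parameter family --- the flow, rotations, translations --- and reuses \eqref{eq:varphi} verbatim), whereas your computation is more elementary and self-contained, making visible exactly which geometric identity is responsible for each term, at the cost of heavier index bookkeeping and of having to be redone separately for $f_R$ and $\ip{V}{\nu}$.
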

\begin{proof}
Recall that for a self-similar solution $F$ satisfying \eqref{eq:SelfSimilar}, the solution to the flow \eqref{eq:Flow} starting from $F$ is given by $F_t = \psi_t F$ where
\[\psi_t :=
\begin{cases}
e^{\mu t} & \text{ if } \alpha = -1\\
\left[1 + \mu(\alpha + 1)t\right]^{\frac{1}{\alpha+1}} & \text{ if } \alpha \not= -1
\end{cases}.
\]
To prove \eqref{eq:L(varphi)}, we consider a local reparametrization $\Phi_t$ with $\Phi_0 = \id$ such that the flow \eqref{eq:Flow} becomes a normal flow:
\begin{equation}
\label{eq:FlowNormal}
\frac{\partial (F_t \circ \Phi_t)}{\partial t} = \varphi (F_t \circ \Phi_t) \nu(F_t \circ \Phi_t)
\end{equation}
Applying \eqref{eq:varphi} in Lemma \ref{lma:Variations} with $f_t := F_t \circ \Phi_t$, we get 
\begin{equation}
\label{eq:L(varphi)_RHS}
\frac{\partial \varphi (F_t \circ \Phi_t)}{\partial t} \bigg|_{t=0} =  g^{il}\frac{\partial \varphi}{\partial h_j^i}\nabla_l\nabla_j \varphi + \varphi h_p^i h_j^p  \frac{\partial \varphi}{\partial h_j^i}
\end{equation}
On the other hand, since $\varphi$ is homogeneous of degree $\alpha$ and the self-similar solution equation \eqref{eq:SelfSimilar} is invariant under reparametrization, we have
\[\varphi(F_t \circ \Phi_t) = \psi_t^{-\alpha} \varphi(F \circ \Phi_t) = \psi_t^{-\alpha}\mu \,\langle F, \nu \rangle \circ \Phi_t = \psi_t^{-\alpha-1} \mu\langle F_t \circ \Phi_t, \nu(F_t \circ \Phi_t)\rangle.\]
Then, differentiating both sides using the evolution formula \eqref{eq:nu}, we can easily obtain that in either case of $\alpha = -1$ and $\alpha \not= -1$:\\
\begin{equation}
\label{eq:L(varphi)_LHS}
\frac{\partial \varphi (F_t \circ \Phi_t)}{\partial t} \bigg|_{t=0} = -\mu(\alpha +1)\varphi + \mu\varphi - \mu\langle F, \nabla \varphi \rangle.
\end{equation}
Here we have used the elementary fact that $\psi_t'(0) = \mu$ in both cases of $\alpha = -1$ and $\alpha \not= -1$. Upon combining \eqref{eq:L(varphi)_RHS} and \eqref{eq:L(varphi)_LHS}, we get the desired result \eqref{eq:L(varphi)}.\\
\end{proof}

From now on, given a rotation vector field $R$ in $\R^{n+1}$, we define $f_R : \Sigma \to \R$ by:
\[f_R := \ip{R(F)}{\nu(F)}.\]
If $\Sigma$ is rotationally invariant along $R$, then $R$ is tangential and as such $f_R \equiv 0$. Similar to \cite{DFL,FM}, we will show using elliptic method that $f_R \equiv 0$ for any relevant rotation vector field $R$, hence establishing rotational symmetry. For this, we need the following Jacobi-type equation:
\begin{lemma}
\label{lma:L(f_R)}
Let $F : \Sigma^n \to \R^{n+1}$ be a self-similar solution to \eqref{eq:Flow} and $R$ be a rotation Killing vector field in $\R^{n+1}$. Then, $f_R$ satisfies the following second-order equation:
\begin{equation}
\label{eq:L(f_R)}
g^{il}\pf{\varphi}{h_j^i}\nabla_l \nabla_j f_R + \mu\ip{F}{\nabla f_R} + \left(\pf{\varphi}{h_j^i}h_p^i h_j^p - \mu\right)f_R = 0.
\end{equation}
\end{lemma}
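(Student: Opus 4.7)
The plan is to view $f_R$ as the normal component of an infinitesimal variation that preserves self-similar solutions, and to read off \eqref{eq:L(f_R)} as the linearization of the self-similar equation $\varphi = \mu\ip{F}{\nu}$ along such a variation. The derivation closely mirrors that of Lemma \ref{lma:L(varphi)}, with the scaling $\psi_t$ replaced by the one-parameter group of rotations generated by $R$.

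Concretely, for any smooth normal variation $F_s$ of $F$ with $\pf{F_s}{s}\big|_{s=0} = f\nu$, I would record two infinitesimal formulas. On the one hand, \eqref{eq:varphi} in Lemma \ref{lma:Variations} gives
\[
\pf{\varphi}{s}\bigg|_{s=0} = g^{il}\pf{\varphi}{h_j^i}\nabla_l\nabla_j f + f\, h_p^i h_j^p \pf{\varphi}{h_j^i}.
\]
On the other hand, using \eqref{eq:nu}, a direct computation yields
\[
\pf{}{s}\ip{F}{\nu}\bigg|_{s=0} = \ip{f\nu}{\nu} + \ip{F}{-\nabla f} = f - \ip{F}{\nabla f}.
\]
Now I would specialize to $f = f_R$. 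Let $\Psi_s$ be the one-parameter subgroup of rotations generated by $R$; since each $\Psi_s$ is an isometry of $\R^{n+1}$ \emph{fixing the origin}, the rotated surface $\Psi_s \circ F$ remains a self-similar solution with the \emph{same} constant $\mu$ (both $\varphi$ and $\ip{F}{\nu}$ are invariant under such isometries). Choosing diffeomorphisms $\Phi_s$ of $\Sigma$ with $\Phi_0 = \id$ so that $F_s := \Psi_s \circ F \circ \Phi_s$ evolves purely in the normal direction absorbs the tangential part $R^T$ of $R$ into $\dot\Phi_0 = -R^T$, giving $\pf{F_s}{s}\big|_{s=0} = f_R\,\nu$. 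Because reparametrization also preserves the self-similar equation, the identity $\varphi(F_s) = \mu\ip{F_s}{\nu_s}$ holds identically in $s$.

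Differentiating this identity at $s=0$ and substituting the two formulas above yields
\[
g^{il}\pf{\varphi}{h_j^i}\nabla_l\nabla_j f_R + f_R\, h_p^i h_j^p \pf{\varphi}{h_j^i} = \mu\bigl(f_R - \ip{F}{\nabla f_R}\bigr),
\]
which rearranges immediately into \eqref{eq:L(f_R)}. The only delicate step is the reparametrization argument, i.e., verifying that after absorbing the tangential piece of $R$ one is genuinely left with the normal variation $f_R \nu$ while the self-similar equation stays intact; this is standard and is the exact device used in the proof of Lemma \ref{lma:L(varphi)}. The essential inputs are that $R$ is a \emph{Killing} field and that its flow \emph{fixes the origin}, so that $\varphi$ and $\ip{F}{\nu}$ are separately preserved. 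All remaining steps reduce to \eqref{eq:varphi} and \eqref{eq:nu}.
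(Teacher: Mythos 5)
Your proposal is correct and follows essentially the same route as the paper's own proof: flow by the one-parameter group of rotations $\Psi_s$, reparametrize to make the variation normal with normal speed $f_R$, use rotational invariance of $\varphi$ to keep the self-similar identity $\varphi = \mu\ip{F}{\nu}$ along the variation, and differentiate both sides using \eqref{eq:varphi} and \eqref{eq:nu}. The resulting identity rearranges to \eqref{eq:L(f_R)} exactly as in the paper.
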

\begin{proof}
Let $\Psi_s$ be the 1-parameter family of diffeomorphisms generated by the vector field $R$ satisfying $\frac{\partial}{\partial s}\Psi_s = R \circ \Psi_s $ with $\Psi_0 = \id$. Then, the composition $\Psi_s \circ F$ satisfies $ \frac{\partial}{\partial s} (\Psi_s \circ F) = R(\Psi_s \circ F) $. We can find a suitable reparametrization $\Phi_s$ with $\Phi_0 = \id$ such that\\
\begin{equation}
\frac{\partial}{\partial s} (\Psi_s \circ F \circ \Phi_s) = R(\Psi_s \circ F \circ \Phi_s)^{\bot} = f_R(\Psi_s \circ F \circ \Phi_s)\nu(\Psi_s \circ F \circ \Phi_s)
\end{equation}
Then, applying \eqref{eq:varphi} with $f_s : = f_R(\Psi_s \circ F \circ \Phi_s)$ yields 
\begin{equation}
\label{eq:L(f_R)_RHS}
\frac{\partial \varphi (\Psi_s \circ F \circ \Phi_s)}{\partial s} \bigg|_{s=0} =  g^{il}\frac{\partial \varphi}{\partial h_j^i}\nabla_l\nabla_j f_R + f_R h_p^i h_j^p  \frac{\partial \varphi}{\partial h_j^i}
\end{equation}
On the other hand, principal curvatures (and hence $\varphi$) are invariant under rotations, thus from the self-similar solution equation \eqref{eq:SelfSimilar}, we get
\begin{align*}
\varphi(\Psi_s \circ F \circ \Phi_s) = \mu \langle \Psi_s \circ F \circ \Phi_s, \nu(\Psi_s \circ F \circ \Phi_s) \rangle
\end{align*}
Differentiating both sides, we get
\begin{equation}
\label{eq:L(f_R)_LHS}
\frac{\partial \varphi(\Psi_s \circ F \circ \Phi_s)}{\partial s} \bigg|_{s=0}= \mu f_R - \mu \langle F, \nabla f_R \rangle.
\end{equation}
Here we have used \eqref{eq:nu} with $f_s = \Psi_s \circ F \circ \Phi_s$. Upon combining \eqref{eq:L(f_R)_RHS} and \eqref{eq:L(f_R)_LHS}, we get the desired result \eqref{eq:L(f_R)}.\\
\end{proof}

\begin{lemma}
Let $F : \Sigma \to \R^{n+1}$ be a self-similar solution to \eqref{eq:Flow}. Consider a constant vector field $V$ in $\R^{n+1}$. Then, the function $\ip{V}{\nu}$ satisfies the following second-order equation:
\begin{equation}
\label{eq:L(V)}
g^{il}\pf{\varphi}{h_j^i}\nabla_l \nabla_j \ip{V}{\nu} + \mu\ip{F}{\nabla\ip{V}{\nu}} + \left(\pf{\varphi}{h_j^i}h_p^i h_j^p - \mu\right)\ip{V}{\nu} = -\mu\ip{V}{\nu}.
\end{equation}
\end{lemma}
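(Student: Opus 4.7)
The plan is to mirror the proof of Lemma \ref{lma:L(f_R)}, replacing the rotation Killing field there by the infinitesimal translation $\Psi_s(x) := x + sV$ generated by the constant vector $V$. The structural difference that accounts for the ``extra'' $-\mu\ip{V}{\nu}$ on the right-hand side of \eqref{eq:L(V)} is that translations move the position vector $F$ (while leaving the Gauss map and the principal curvatures unchanged).

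First, I would observe that $\frac{\partial}{\partial s}(\Psi_s\circ F) = V$ and choose a tangential reparametrization $\Phi_s$ with $\Phi_0 = \id$ so that $F_s := \Psi_s\circ F\circ\Phi_s$ evolves as a normal flow with speed $\ip{V}{\nu(F_s)}$. Applying \eqref{eq:varphi} at $s = 0$ with $f_s := \ip{V}{\nu(F_s)}$ then yields one expression for $\frac{\partial\varphi(F_s)}{\partial s}\bigg|_{s=0}$, namely
\[g^{il}\pf{\varphi}{h_j^i}\nabla_l\nabla_j\ip{V}{\nu} + \ip{V}{\nu}\,h_p^i h_j^p\pf{\varphi}{h_j^i}.\]

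Second, I would compute the same derivative a different way, using translation-invariance of the principal curvatures together with \eqref{eq:SelfSimilar}. Since $\Psi_s$ is a Euclidean isometry, $\varphi(F_s) = \varphi(F\circ\Phi_s) = \mu\ip{F\circ\Phi_s}{\nu(F\circ\Phi_s)}$; and because translations preserve the Gauss map, $\nu(F\circ\Phi_s) = \nu(F_s)$ and $F\circ\Phi_s = F_s - sV$. Hence
\[\varphi(F_s) = \mu\,\ip{F_s}{\nu(F_s)} - \mu s\,\ip{V}{\nu(F_s)}.\]
Differentiating at $s = 0$, using $\frac{\partial F_s}{\partial s}\big|_{s=0} = \ip{V}{\nu}\,\nu$ and \eqref{eq:nu}, the two $\mu\ip{V}{\nu}$ contributions (one from $\partial_s\ip{F_s}{\nu(F_s)}$, one from $-\partial_s\bigl(s\,\ip{V}{\nu(F_s)}\bigr)$) cancel, and one is left with $-\mu\,\ip{F}{\nabla\ip{V}{\nu}}$.

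Equating the two expressions for $\frac{\partial\varphi(F_s)}{\partial s}\bigg|_{s=0}$, then adding and subtracting $\mu\ip{V}{\nu}$ on the left-hand side to display the zeroth-order coefficient in the stability-operator form $\pf{\varphi}{h_j^i}h_p^i h_j^p - \mu$, would produce \eqref{eq:L(V)}. I do not anticipate any serious obstacle: the argument is a direct adaptation of Lemma \ref{lma:L(f_R)}, and the only care needed is the correct bookkeeping of the $\pm sV$ translation terms, which is precisely what distinguishes this case from the rotation case and places $\ip{V}{\nu}$ at eigenvalue $-\mu$ of the stability operator rather than in its kernel.
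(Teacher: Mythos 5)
Your proposal is correct and follows essentially the same route as the paper: both consider the translated family $F+\tau V$ (reparametrized to a normal flow with speed $\ip{V}{\nu}$), use translation-invariance of the principal curvatures together with \eqref{eq:SelfSimilar} to get a second expression for $\partial_\tau\varphi$, and equate it with \eqref{eq:varphi}. Your bookkeeping of the $\pm sV$ terms and the cancellation of the two $\mu\ip{V}{\nu}$ contributions is exactly what the paper's terser computation of $\partial_\tau\big(\mu\langle F,\nu(F_\tau)\rangle\big)$ amounts to.
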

\begin{proof}
The proof is similar to previous two lemmas. The key idea is to consider the family of translating hypersurfaces defined by $F_{\tau} := F+\tau V$ so that 
\[\left(\frac{\partial F_{\tau}}{\partial \tau}\right)^{\perp} = \langle V, \nu \rangle \nu.\] From the fact that principal curvatures are invariant under translations, we have
\begin{align*}
\varphi(F_{\tau}) =\varphi(F) = \mu \langle F, \nu(F) \rangle =\mu \langle F, \nu(F_{\tau}) \rangle
\end{align*}
Then \eqref{eq:L(V)} follows from differentiating both sides at $\tau = 0$ by applying \eqref{eq:nu} and \eqref{eq:varphi}.\\
\end{proof}

For each $\varphi$ under consideration, we define a second-order operator $L_\varphi : C^2(M) \to C^0(M)$ by:
\begin{equation}
L_\varphi f := g^{il}\pf{\varphi}{h_j^i}\nabla_l \nabla_j f + \mu\ip{F}{\nabla f} + \left(\pf{\varphi}{h_j^i}h_p^i h_j^p - \mu\right)f
\end{equation}
Note that $L_\varphi$ is not necessarily elliptic. One sufficient condition to guarantee the ellipticity of $L_\varphi$ is that the matrix $\displaystyle{\left[\pf{\varphi}{h_i^j}\right]}$
is positive-definite, or equivalently, $\displaystyle{\pf{\varphi}{\lambda_i} > 0}$ for any $i = 1, \cdots, n$.

Using this operator $L_\varphi$, one can express \eqref{eq:L(varphi)}, \eqref{eq:L(f_R)} and \eqref{eq:L(V)} as:
\begin{align*}
L_\varphi\varphi & = -(\alpha + 1)\mu\varphi\\
L_\varphi f_R & = 0\\
L_\varphi	 \ip{V}{\nu} & = -\mu\ip{V}{\nu}
\end{align*}
meaning that $\varphi$, $f_R$ and $\ip{V}{\nu}$ are eigenfunctions of $L_\varphi$. In particular, when $\deg \varphi = -1$, we have $L_\varphi \varphi = 0$ so that both $\varphi$ and $f_R$ are both in the kernel of $L_\varphi$. It again makes the case $\deg \varphi = -1$ very special from other degrees.

The following lemma will be used to make comparison between two eigenfunctions of $L_\varphi$ with different eigenvalues.
\begin{lemma}
\label{lma:Quotient}
Let $F : \Sigma \to \R^{n+1}$ be a self-similar solution to \eqref{eq:Flow}. Then, for any $\lambda_1, \lambda_2 \in \mathbb{R}$ and $f_1,f_2 \in C^2(\Sigma)$ where $f_2 \neq 0$ on $\Sigma$ such that $L_{\varphi} f_1 = \lambda_1 f_1$ and $L_{\varphi} f_2 = \lambda_2 f_2$, we have
\begin{align}
\label{eq:Quotient}
& g^{il}\frac{\partial \varphi}{\partial h_j^i}\nabla_l \nabla_j \left(\frac{f_1}{f_2}\right)\\
\nonumber & =  -\mu \left\langle F, \nabla\left(\frac{f_1}{f_2}\right) \right\rangle + \frac{f_1}{f_2}(\lambda_1 - \lambda_2) - \frac{2}{f_2}g^{il}\frac{\partial \varphi}{\partial h_j^i}\nabla_l \left(\frac{f_1}{f_2}\right) \nabla_j f_2
\end{align}
\end{lemma}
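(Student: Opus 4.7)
The identity is an algebraic manipulation of the eigenvalue equations, so my plan is simply to execute the quotient/product rule carefully and read off the claimed formula.

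\textbf{Setup.} Write $u := f_1/f_2$ so that $f_1 = u f_2$, and abbreviate the coefficient tensor
\[
a^{lj} := g^{il}\frac{\partial\varphi}{\partial h_j^i},\qquad Q := \frac{\partial\varphi}{\partial h_j^i}h_p^i h_j^p - \mu,
\]
so that $L_\varphi f = a^{lj}\nabla_l\nabla_j f + \mu\langle F,\nabla f\rangle + Qf$. Note that the symmetry of the Hessian $\nabla_l\nabla_j$ lets us treat $a^{lj}$ as symmetric when it pairs with a Hessian; this is the reason a factor of $2$ will appear in the mixed term.

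\textbf{Step 1: Product rule expansions.} Compute
\[
\nabla_j f_1 = (\nabla_j u)f_2 + u\nabla_j f_2,\qquad
\nabla_l\nabla_j f_1 = f_2\nabla_l\nabla_j u + \nabla_l u\,\nabla_j f_2 + \nabla_j u\,\nabla_l f_2 + u\nabla_l\nabla_j f_2.
\]
Contracting with $a^{lj}$ and using symmetry gives
\[
a^{lj}\nabla_l\nabla_j f_1 = f_2\, a^{lj}\nabla_l\nabla_j u + 2\, a^{lj}\nabla_l u\,\nabla_j f_2 + u\, a^{lj}\nabla_l\nabla_j f_2.
\]

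\textbf{Step 2: Substitute the eigenvalue equations.} From $L_\varphi f_1 = \lambda_1 f_1$ and $L_\varphi f_2 = \lambda_2 f_2$,
\[
a^{lj}\nabla_l\nabla_j f_1 = \lambda_1 f_1 - \mu\langle F,\nabla f_1\rangle - Q f_1,\qquad
a^{lj}\nabla_l\nabla_j f_2 = \lambda_2 f_2 - \mu\langle F,\nabla f_2\rangle - Q f_2.
\]
Insert the second expression into the expansion of $a^{lj}\nabla_l\nabla_j f_1$ and equate with the first; the terms $u Q f_2 = Q f_1$ cancel against $-Q f_1$ on the left side.

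\textbf{Step 3: Collect the drift terms.} What remains is
\[
\lambda_1 f_1 - \mu\langle F,\nabla f_1\rangle = f_2\, a^{lj}\nabla_l\nabla_j u + 2\, a^{lj}\nabla_l u\,\nabla_j f_2 + \lambda_2 f_1 - u\mu\langle F,\nabla f_2\rangle.
\]
Using $\nabla f_1 = f_2\nabla u + u\nabla f_2$, the drift combination collapses:
\[
-\mu\langle F,\nabla f_1\rangle + u\mu\langle F,\nabla f_2\rangle = -f_2\,\mu\langle F,\nabla u\rangle.
\]
Dividing the resulting equation by $f_2$ (which is nonzero by hypothesis) yields exactly \eqref{eq:Quotient}.

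\textbf{Obstacles.} There is no real obstacle beyond bookkeeping; the only point requiring slight care is the symmetrization that produces the factor of $2$ in the cross term $2 a^{lj}\nabla_l u\,\nabla_j f_2$, and the observation that the zeroth order coefficient $Q$ drops out entirely, which is precisely what makes the quotient identity so clean and useful for comparing two eigenfunctions of $L_\varphi$.
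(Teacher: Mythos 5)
Your proof is correct and is essentially the same computation as the paper's: the paper expands $\nabla_l\nabla_j(f_1/f_2)$ via a second-order quotient identity and substitutes the two eigenvalue equations, whereas you expand $\nabla_l\nabla_j f_1 = \nabla_l\nabla_j(uf_2)$ via the product rule and then divide by $f_2$ --- the same bookkeeping in a different order, with the same cancellation of the zeroth-order coefficient and the same collapse of the drift terms. The only point to note is that the factor of $2$ in the cross term uses the symmetry of $g^{il}\partial\varphi/\partial h_j^i$ as a bilinear form (not merely the symmetry of the Hessian), which the paper's quotient identity implicitly relies on as well.
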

\begin{proof}
From $L_{\varphi} f_1 = \lambda_1 f_1$ and $L_{\varphi} f_2 = \lambda_2 f_2$, we have \\
\begin{equation}
\begin{aligned}
\label{eq:Eigens}
g^{il}\frac{\partial \varphi}{\partial h_j^i} \nabla_l \nabla_j f_1 &= -\mu \langle F, \nabla f_1 \rangle - \left(\frac{\partial \varphi}{\partial h_j^i}h_p^i h_j^p - \lambda_1+\mu\right)f_1 \\
g^{il}\frac{\partial \varphi}{\partial h_j^i} \nabla_l \nabla_j f_2 &= -\mu \langle F, \nabla f_2 \rangle - \left(\frac{\partial \varphi}{\partial h_j^i}h_p^i h_j^p - \lambda_2+\mu\right)f_2
\end{aligned}
\end{equation} \\
Using \eqref{eq:Eigens}, and the well-known quotient identity\\
\begin{align*}
& \nabla_i \nabla_j \left(\frac{h}{k}\right)\\
& = \frac{k \nabla_i \nabla_j h - h\nabla_i \nabla_j k}{k^2} - \frac{2}{k}\nabla_i\left(\frac{h}{k}\right)\nabla_j h
\end{align*}
which holds for any $h,k \in C^2(\Sigma)$ wherever $k \not= 0$, we can derive using \eqref{eq:Eigens} that:
\begin{align*}
& g^{il}\frac{\partial \varphi}{\partial h_j^i}\nabla_l\nabla_j \left(\frac{f_1}{f_2}\right)\\
& = \frac{f_2\left[-\mu \langle F, \nabla f_1 \rangle - \left(\dfrac{\partial \varphi}{\partial h_j^i}h_p^i h_j^p - \lambda_1+\mu\right)f_1\right]}{f_2^2}\\
& \quad -\frac{f_1 \left[-\mu \langle F, \nabla f_2 \rangle - \left(\dfrac{\partial \varphi}{\partial h_j^i}h_p^i h_j^p - \lambda_2+\mu\right)f_2\right]}{f_2^2} \\
& \quad -\frac{2}{f_2}g^{il}\frac{\partial \varphi}{\partial h_j^i}\nabla_l \left(\frac{f_1}{f_2}\right) \nabla_j f_2 \\
&= -\mu \left\langle F, \frac{f_2 \nabla f_1 - f_1 \nabla f_2}{f_2^{2}} \right\rangle + \frac{f_1}{f_2}(\lambda_1 - \lambda_2) - \frac{2}{f_2}g^{il}\frac{\partial \varphi}{\partial h_j^i}\nabla_l \left(\frac{f_1}{f_2}\right) \nabla_j f_2 \\
&=  -\mu \left\langle F, \nabla\left(\frac{f_1}{f_2}\right)\right\rangle + \frac{f_1}{f_2}(\lambda_1 - \lambda_2) - \frac{2}{f_2}g^{il}\frac{\partial \varphi}{\partial h_j^i}\nabla_l \left(\frac{f_1}{f_2}\right) \nabla_j f_2,
\end{align*}
as desired.
\end{proof}

\section{Uniqueness of Compact Self-Expanders}
In the rest of this article, we will focus on inverse curvature flows \eqref{eq:FlowInverse} by degree $-1$ homogeneous symmetric function $\rho$'s of principal curvatures as described in the introduction. Set $\varphi := -\frac{1}{\rho}$. It is easy to see that whenever $\rho \not= 0$, we have:
\[\pf{\rho}{\lambda_i} > 0 \quad \text{if and only if} \quad \pf{\varphi}{\lambda_i} > 0.\]

Our first result concerns about compact self-expanders to the inverse $\rho$-flows which are parabolic on $\Gamma$. We are going to show that round spheres are the only compact self-expanders, thus generalizing the uniqueness result for compact self-expanders to the inverse mean curvature flow proved by Drugan, Lee, and Wheeler in \cite{DLW}, and for inverse $Q_k$-flows proved by Kwong, Lee, and Pyo in \cite{KLP}.

In both \cite{DLW} and \cite{KLP}, the authors obtained the uniqueness results using Hsiung-Minkowski's integral formulae, which concern about \emph{symmetric polynomials} $\sigma_k$'s of principal curvatures. In order to extend their results to a more general class of flows where $\rho$ are not explicit, we use arguments different from those in \cite{DLW} and \cite{KLP}. Applying the ideas in the work \cite{DFL} of Drugan, the third-named author, and Lee about rotational rigidity of self-expanders to inverse mean curvature flow, we will show that compact self-expanders of such a general inverse $\rho$-flow must be rotationally symmetric along the rotation vector field $R$ about any axis through the origin, hence they must be round spheres centered at the origin.

The principal curvatures of the round sphere of radius $r_0$ are $(r_0^{-1}, \cdots, r_0^{-1})$  and $\langle F, \nu\rangle = - r_0$ when $\nu$ is taken to inward-pointing. Therefore, it is a self-expander to the inverse $\rho$-flow whenever $\rho(1,\cdots,1) > 0$, and the constant $\mu$ is given by:
\[-\frac{1}{\rho(r_0^{-1},\ldots,r_0^{-1})} = -\mu r_0 \quad \Longrightarrow \quad \mu = \frac{1}{\rho(1,\cdots,1)} > 0.\]
We will only consider function $\rho$'s with property that $\rho(1,\cdots,1) > 0$. Let's state our uniqueness theorem precisely:

\begin{theorem}
\label{thm:UniquenessCompact}
Consider the inverse $\rho$-flow by a degree $1$ homogeneous symmetric $C^1$ function of principal curvatures $\rho(\lambda_1, \cdots, \lambda_n) : \Gamma \to \R_+$ such that $(1, \cdots, 1) \in \Gamma$ and $\dfrac{\partial\rho}{\partial\lambda_i} > 0$ on $\Gamma$ for any $i$. Then, the only compact self-expanders to the inverse $\rho$-flow must be round spheres.
\end{theorem}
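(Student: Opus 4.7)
The plan is to establish that any compact self-expander $\Sigma$ is invariant under every rotational Killing field whose axis passes through the origin; once this rotational symmetry is in hand, the orbit structure of $SO(n+1)$ on $\R^{n+1}$ forces $F(\Sigma)$ to be a union of concentric spheres about the origin, and connectedness picks out a single round sphere. The approach follows the spirit of \cite{DFL} and differs from the Hsiung--Minkowski integral method used in \cite{DLW, KLP}.

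Set $\varphi := -1/\rho$. Since $\rho > 0$ on $\Gamma$, $\varphi$ is $C^1$ and strictly negative on $\Sigma$; moreover $\partial\varphi/\partial\lambda_i = \rho^{-2}\,\partial\rho/\partial\lambda_i > 0$, so $L_\varphi$ is uniformly elliptic on the compact manifold $\Sigma$. Because $\deg\varphi = -1$, Lemma~\ref{lma:L(varphi)} gives $L_\varphi\varphi = 0$; and for every rotational Killing field $R$ about an axis through the origin, Lemma~\ref{lma:L(f_R)} gives $L_\varphi f_R = 0$. Since both $\varphi$ and $f_R$ lie in the kernel of $L_\varphi$ and $\varphi$ never vanishes, we can form $u := f_R/\varphi \in C^2(\Sigma)$ and apply Lemma~\ref{lma:Quotient} with $\lambda_1 = \lambda_2 = 0$ to obtain
\[
g^{il}\frac{\partial\varphi}{\partial h_j^i}\nabla_l\nabla_j u \;+\; \mu\ip{F}{\nabla u} \;+\; \frac{2}{\varphi}\,g^{il}\frac{\partial\varphi}{\partial h_j^i}\,\nabla_l u\,\nabla_j \varphi \;=\; 0.
\]
This is a linear uniformly elliptic equation with \emph{no} zeroth-order term, so the strong maximum principle on the compact connected $\Sigma$ forces $u$ to be constant.

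To identify the constant as zero, pick $p_0 \in \Sigma$ at which $|F|^2$ attains its maximum. Vanishing of the tangential gradient of $|F|^2$ there gives $F(p_0) \perp T_{p_0}\Sigma$, so $F(p_0)$ is parallel to $\nu(p_0)$. On the other hand, any rotational Killing field $R$ about an axis through the origin is everywhere tangent to the sphere through $F(p_0)$ centered at the origin, so $R(F(p_0)) \perp F(p_0)$ and hence $f_R(p_0) = \ip{R(F(p_0))}{\nu(p_0)} = 0$. Since $\varphi(p_0) \neq 0$, the constant value of $u$ is $0$, which gives $f_R \equiv 0$ on $\Sigma$. As $R$ was an arbitrary rotational Killing field through the origin, $F(\Sigma)$ is $SO(n+1)$-invariant, and the theorem follows.

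The feature that makes this short argument work is that $\deg\rho = 1$ forces $\deg\varphi = -1$, so that $\varphi$ and $f_R$ land in the \emph{same} eigenspace of $L_\varphi$ (namely its kernel); this is exactly what allows the quotient trick to produce a PDE without a zeroth-order term. For homogeneity degrees other than $-1$ the analogous quotient would carry a nontrivial zeroth-order term, and the main work would have to be done in controlling its sign. Here, the only genuine hypotheses that are used are $\rho > 0$ on $\Gamma$ (to ensure $\varphi$ never vanishes on $\Sigma$) and $\partial\rho/\partial\lambda_i > 0$ on $\Gamma$ (to guarantee ellipticity of $L_\varphi$), both supplied by the assumptions of the theorem.
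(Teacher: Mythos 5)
Your proof is correct and follows essentially the same route as the paper: set $\varphi=-1/\rho$, observe that $\deg\varphi=-1$ puts both $\varphi$ and $f_R$ in the kernel of $L_\varphi$, apply Lemma~\ref{lma:Quotient} to $f_R/\varphi$ to get a uniformly elliptic equation with no zeroth-order term, and invoke the strong maximum principle on the compact $\Sigma$ to conclude $f_R/\varphi$ is constant. The only place you diverge is in showing that this constant vanishes: the paper integrates, using $\textup{div}(R)=0$ and the Divergence Theorem to get $\oint_\Sigma f_R\,d\Sigma=0$ and hence a zero of $f_R$, whereas you locate a zero pointwise at a maximum of $\abs{F}^2$, where $F(p_0)$ is normal to $\Sigma$ and $R(F(p_0))\perp F(p_0)$ forces $f_R(p_0)=0$. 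Both arguments are valid; yours is slightly more elementary in that it avoids the integral identity, while the paper's is insensitive to where $f_R$ vanishes. Your closing remark that $SO(n+1)$-invariance plus connectedness yields a single round sphere matches the paper's (terser) conclusion.
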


\begin{proof}
Set $\varphi := -1/\rho$. Let $R$ be an arbitrary rotation vector field in $\R^{n+1}$ about an axis passing through the origin. We want to show $f_R = \langle R,\nu\rangle \equiv 0$ on $\Sigma$.

By Lemma \ref{lma:L(varphi)} and Lemma \ref{lma:L(f_R)}, we have
\begin{align*}
L_{\varphi} \varphi &= 0 \\
L_{\varphi} f_R &= 0
\end{align*}
Then, we apply Lemma \ref{lma:Quotient} to $f_1 = f_R$ and $f_2 = \varphi$ and get \\
\begin{equation}
\label{eq:EllipticCompact}
g^{il}\frac{\partial \varphi}{\partial h_j^i} \nabla_l \nabla_j \left(\frac{f_R}{\varphi}\right) =  -\mu \left\langle F, \nabla\left(\frac{f_R}{\varphi}\right)\right\rangle  + \frac{2}{\varphi}g^{il}\frac{\partial \varphi}{\partial h_j^i}\nabla_l \left(\frac{f_R}{\varphi}\right) \nabla_j \varphi
\end{equation}
Note that \eqref{eq:EllipticCompact} is a second-order elliptic equation by the parabolic condition on $\rho$. Furthermore, by the compactness of $\Sigma$, the set of all possible principal curvatures
\[\{(\lambda_1(p), \ldots, \lambda_n(p)) : p \in \Sigma\}\]
is compact in $\Gamma$. As a result, the equation \eqref{eq:EllipticCompact} is uniformly elliptic.

By the compactness of $\Sigma$, the function $\frac{f_R}{\varphi}$ achieves a global maximum. Observing that the RHS of \eqref{eq:EllipticCompact} involves only gradient terms of $\frac{f_R}{\varphi}$, we can apply the elliptic strong maximum principle and conclude that $\frac{f_R}{\varphi} \equiv C$, where $C$ is a constant. Moreover, as the rotational axis of $R$ passes through the origin, we have $\textup{div}\,(R) = 0$, and so Divergence Theorem shows that \\
\begin{align*}
\oint_{\Sigma} f_R\, d\Sigma = \oint_{\Sigma} \langle R, \nu \rangle\,d\Sigma = 0
\end{align*}\\
Hence we have $f_R(p) = 0$ for some $p \in \Sigma$, and so the constant $C$, and hence $f_R$, must be $0$. Therefore, $\Sigma$ is rotationally symmetric about $R$. Since $R$ can be any rotational vector field with axis through the origin, we conclude that $\Sigma$ must be a round sphere.
\end{proof}

\section{Asymptotically Cylindrical Self-Expanders}
Our second result concerns about non-compact self-expanders to parabolic inverse $\rho$-flows. Again we take $\nu$ to be inward-pointing unit normal for the cylinder. A round cylinder of radius $r_0$ has principal curvatures $(r_0^{-1}, \cdots, r_0^{-1},0)$, and $\langle F, \nu\rangle = -r_0$. Therefore, it is a self-expander whenever $\rho(1,\cdots,1,0) > 0$, and the constant $\mu$ is given by $\mu = \rho(1,\cdots,1,0)^{-1} > 0$.

In this section, we will extend the rotational rigidity of asymptotically cylindrical self-expanders to a general class of parabolic inverse $\rho$-flows. We first define the meaning of asymptotically cylindrical ends.

\begin{definition}[Asymptotically Cylindrical Ends]
\label{def:ACEnds}
Consider an end $E$ of a hypersurface $\Sigma^n$ is \emph{$C^k$-asymptotic} to a round cylinder $\mathcal{C}$ if there exist $r > 0$ and $u \in C^k(\mathcal{C} \backslash B_r(0))$ such that $E$ can be parametrized as a normal graph over $\mathcal{C}$ of the form:
\[F(p) = p + u(p)\nu_{\textup{cyl}}(p)\]
where $p \in \mathcal{C}$ and $\nu_{\textup{cyl}}$ is the inward-pointing unit normal of $\mathcal{C}$, and as $\abs{p} \to \infty$, the height function $u$ satisfies:
\[\abs{\nabla_{\textup{cyl}}^{(j)} u(p)}_{g_{\textup{cyl}}} \leq o(\abs{p}^{-j}) \quad \text{ for } j = 0, 1, 2, \cdots, k,\]
where $g_{\textup{cyl}}$ is the first fundamental form of $\mathcal{C}$, and $\nabla_{\textup{cyl}}$ is the covariant derivative of $\mathcal{C}$.
\end{definition}

\begin{remark}
We only require the \emph{end} $E$ to be graphical over a cylinder, but the compact part $\Sigma \backslash E$ can be of any topological type.
\end{remark}

By straight-forward computations as in \cite[Section 3]{DFL} and \cite[Section 3]{FM}, a $C^1$-asymptotically cylindrical end $E$ satisfies the following asymptotics:
\begin{itemize}
\item $F_E = F_{\textup{cyl}} + o(1)$
\item $\nu_E = \nu_{\textup{cyl}} + o(\abs{p}^{-1})$
\item $\langle F_E, \nu_E\rangle = \underbrace{\langle F_{\textup{cyl}}, \nu_{\textup{cyl}}\rangle}_{-\textup{radius of }\mathcal{C}} + o(1)$
\item $\langle R(F_E), \nu_E\rangle = \underbrace{\langle R(F_{\textup{cyl}}), \nu_{\textup{cyl}}\rangle}_{=0} + o(1) = o(1)$
\end{itemize}
Furthermore, if the end is $C^2$-asymptotically cylindrical, we further have:
\begin{itemize}
\item $\|h_E - h_{\textup{cyl}}\|_{g_{\textup{cyl}}} = o(\abs{p}^{-2})$
\item $H_E = H_{\textup{cyl}} + o(\abs{p}^{-2})$
\end{itemize}

Now we state and prove the second main result, about the rotational rigidity of asymptotically cylindrical self-expanders to any inverse $\rho$-flow, hence extending the result in \cite{DFL} to a much wider class of flows.

\begin{theorem}
\label{thm:RigidityNoncompact}
Consider the inverse $\rho$-flow by a degree $1$ homogeneous symmetric $C^1$ function of principal curvatures $\rho(\lambda_1, \cdots, \lambda_n) : \Gamma \to \R_+$ such that $(1, \cdots, 1, 0) \in \Gamma$ and $\dfrac{\partial\rho}{\partial\lambda_i} > 0$ on $\Gamma$ for any $i$. Suppose $\Sigma$ is a self-expander to the inverse $\rho$-flow such that either:
\begin{itemize}
\item $\Sigma$ has exactly one end, which is $C^2$-asymptotic to a round cylinder, or
\item $\Sigma$ has exactly two ends, which are $C^2$-asymptotic to two co-axial round cylinders,
\end{itemize}
then $\Sigma$ must be rotationally symmetric about the axis of the cylinder(s).
\end{theorem}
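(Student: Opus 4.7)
The plan is to adapt the argument used in Theorem \ref{thm:UniquenessCompact} to the non-compact setting, replacing the divergence-theorem step (which pinned down the constant in the compact case) by an asymptotic decay argument on the ends. Let $R$ denote the rotation Killing vector field in $\R^{n+1}$ about the common axis of the asymptotic cylinder(s); since the invariant quantity we control is $f_R := \langle R,\nu\rangle$, the goal is to prove $f_R \equiv 0$ on $\Sigma$, which forces $R$ to be everywhere tangent to $\Sigma$ and hence $\Sigma$ to be rotationally symmetric about that axis.

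Set $\varphi := -1/\rho$ and $w := f_R/\varphi$. Because $\deg\varphi = -1$, Lemma \ref{lma:L(varphi)} and Lemma \ref{lma:L(f_R)} simultaneously yield $L_\varphi\varphi = 0$ and $L_\varphi f_R = 0$. Applying Lemma \ref{lma:Quotient} with $(f_1,f_2,\lambda_1,\lambda_2) = (f_R,\varphi,0,0)$ then produces
$$g^{il}\frac{\partial\varphi}{\partial h_j^i}\nabla_l\nabla_j w = -\mu\,\langle F,\nabla w\rangle - \frac{2}{\varphi}g^{il}\frac{\partial\varphi}{\partial h_j^i}\nabla_l w\,\nabla_j\varphi.$$
Every term on the right side involves only $\nabla w$, so $w$ satisfies a homogeneous linear second-order elliptic equation with no zeroth-order term. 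Ellipticity follows from the hypothesis $\partial\rho/\partial\lambda_i > 0$ (equivalently $\partial\varphi/\partial\lambda_i > 0$), and the equation is well-defined since $\varphi$ is nowhere zero on $\Sigma$ because $\rho > 0$ on $\Gamma$.

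Next I analyze the asymptotics of $w$ on each end. From the $C^2$-asymptotic cylindrical decay estimates listed after Definition \ref{def:ACEnds}, $f_R = \langle R,\nu\rangle = o(1)$ on each end (since $R$ is tangent to the limiting cylinder, so $\langle R, \nu_{\textup{cyl}}\rangle = 0$), while the principal curvatures converge to $(r_k^{-1},\ldots,r_k^{-1},0)$ and hence $\varphi$ converges to the nonzero constant $-r_k/\rho(1,\ldots,1,0)$ along the $k$-th end. Consequently $w \to 0$ at infinity on every end. Suppose for contradiction that $\sup_\Sigma w > 0$. Since $w$ decays to $0$ on the (at most two) ends and is continuous on the compact core, its supremum must be attained at some interior point, and the strong maximum principle applied to the homogeneous elliptic equation above forces $w$ to be globally constant, contradicting $w \to 0$. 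The same argument applied to $-w$ rules out $\inf_\Sigma w < 0$, so $w \equiv 0$ and hence $f_R \equiv 0$. Since $R$ is an arbitrary rotation about the axis of the asymptotic cylinder(s), $\Sigma$ is invariant under the entire $SO(n)$-action fixing that axis.

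The main technical obstacle is justifying the strong maximum principle globally on the non-compact $\Sigma$. Ellipticity on every relatively compact open set is automatic: near each end the principal curvatures approach an interior point of $\Gamma$, while on any compact subset of $\Sigma$ the principal curvature map takes values in a compact subset of $\Gamma$ on which all $\partial\rho/\partial\lambda_i$ are uniformly positive, yielding uniform ellipticity on compact sets; this is enough to run the standard strong maximum principle. The decay $w \to 0$ on the ends then forces a positive supremum (or negative infimum) to be attained interiorly, closing the contradiction. A further subtlety is that the compact core of $\Sigma$ may be topologically non-trivial, but the argument above uses only completeness, continuity of $w$, and the asymptotic decay, so the topological type of $\Sigma \setminus E$ is irrelevant.
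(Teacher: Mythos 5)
Your proposal is correct and follows essentially the same route as the paper: both derive the homogeneous gradient-only elliptic equation for the quotient $f_R/\varphi$ (the paper uses $f_R/\langle F,\nu\rangle$, which differs only by the constant $\mu$) via Lemmas \ref{lma:L(varphi)}, \ref{lma:L(f_R)} and \ref{lma:Quotient}, use the $C^2$-asymptotics to control ellipticity and to get decay of the quotient on the ends, and conclude $f_R\equiv 0$ by the strong maximum principle at an interior extremum.
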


\begin{proof}
Similar to the proof in Theorem \ref{thm:UniquenessCompact}, we consider the elliptic equations derived in Lemma \ref{lma:L(varphi)} and Lemma \ref{lma:L(f_R)}:
\begin{align*}
L_{\varphi} f_R &= 0 \\
L_{\varphi} \langle F, \nu \rangle &= 0
\end{align*}
Recall that $\varphi = \mu\langle F,\nu\rangle$. Then, we apply Lemma \ref{lma:Quotient} on $f_1 = f_R$ and $f_2 = \langle F, \nu \rangle$:\\
\begin{equation}
\label{eq:EllipticNonCompact}
g^{il}\frac{\partial \varphi}{\partial h_j^i} \nabla_l \nabla_j \frac{f_R}{\langle F, \nu \rangle} =  -\mu \left\langle F, \nabla \frac{f_R}{\langle F, \nu \rangle} \right\rangle  - \frac{2}{\langle F, \nu \rangle}g^{il}\frac{\partial \varphi}{\partial h_j^i}\nabla_l \frac{f_R}{\langle F, \nu \rangle} \nabla_j \langle F, \nu \rangle
\end{equation}
We pick an arbitrary rotation vector field $R$ about the axis of the asymptotic cylinder. Since $\Sigma$ is $C^2$ asymptotic to cylinders, the principal curvatures $(\lambda_1, \ldots, \lambda_n) \to (r^{-1}, \cdots, r^{-1}, 0)$ uniformly when approaching to the asymptotic cylinder with radius $r$, the set of all possible principal curvatures
\[\{(\lambda_1(p), \ldots, \lambda_n(p)) : p \in \Sigma\}\]
is compact in $\Gamma$. As a result, the equation \eqref{eq:EllipticNonCompact} is uniformly elliptic.

Furthermore, we have $f_R \to 0$ uniformly at infinity, and $\abs{\langle F, \nu\rangle}$ converges uniformly to $r_0$, the radius of the asymptotic cylinder. Hence, for any $\varepsilon >0$, there is a large $s = s(\varepsilon) > 0$ so that 
\begin{equation}
\label{eq:Ends}
\sup_{\Sigma \backslash B_s(0)} \abs{\frac{f_R}{\langle F, \nu \rangle}} < \varepsilon.
\end{equation}

Now suppose $f_R \not\equiv 0$, and there exists $q \in \Sigma$ such that $f_R(q) > 0$ (the case $f_R(q) < 0$ is similar). Take $\varepsilon := \frac{f_R}{2\langle F,\nu\rangle}(q) > 0$ and let $s=s(\varepsilon)$ be sufficiently large so that $q \in B_s(0)$ and \eqref{eq:Ends} holds. Then, we have 
\[\frac{f_R}{\langle F, \nu\rangle}(q) > \varepsilon > \sup_{\Sigma \backslash B_s(0)} \abs{\frac{f_R}{\langle F, \nu \rangle}} \geq \sup_{\Sigma \cap \partial B_s(0)} \abs{\frac{f_R}{\langle F, \nu \rangle}}.\]
In other words, the function $\frac{f_R}{\langle F, \nu \rangle}$ achieves an interior maximum on $\Sigma \cap B_s(0)$, which violates the elliptic strong maximum principle applied to \eqref{eq:EllipticNonCompact} on $\Sigma \cap B_s(0)$. We get a contradiction, and so it is necessary that $f_R \equiv 0$ on $\Sigma$. This proves that $\Sigma$ is rotationally symmetric about the axis of the cylinder.
\end{proof}

\begin{remark}
\label{rmk:C1}
Note that $C^2$-asymptotics are essential in the proof of Theorem \ref{thm:RigidityNoncompact} because the strong maximum principle requires uniform ellipticity. However, if we further assume that there exists $C > 0$ such that $C \geq \pf{\rho}{\lambda_i} \geq \frac{1}{C}$ on $\Gamma$ for any $i$ (which is the case when $\rho = H$), then Theorem \ref{thm:RigidityNoncompact} still holds if the ends are just $C^1$-asymptotically cylindrical.
\end{remark}

\begin{remark}
In the two-end case considered in Theorem \ref{thm:RigidityNoncompact}, we do not require the two asymptotic cylinders to have the same radius, but they are required to be co-axial in order to have the same set of rotational Killing vector fields $R$. However, if the radius of the two cylinders are the same, then by the upcoming ODE analysis, such a self-expander must be identical to that cylinder.
\end{remark}

\section{Existence of Non-Cylindrical Self-Expanders}

A rotationally symmetric hypersurface $\Sigma^{n \geq 2}$ in $\R^{n+1}$ can be described by a profile curve $(r(s), h(s))$ in the $(r,h)$-plane. By rotating the profile curve along the $h$-axis, the hypersurface generated by the profile curve is parametrized by:
\begin{equation}
\label{eq:Rotational}
F(\omega, s) = r(s)\,\Phi(\omega) + h(s) e_{n+1},
\end{equation}
where $\Phi(\omega)$ parametrizes the unit sphere $\mathbb{S}^{n-1} \subset \R^n := \textup{span}\{e_i\}_{i=1}^n$ by the Euler's angles $\omega = (\theta_1, \cdots, \theta_{n-1})$.

For simplicity, we will only consider profile curves of the form $(r(h),h)$, i.e. graphs over the $h$-axis in the $(r,h)$-plane, and that $r(h) > 0$. The principal curvatures of \eqref{eq:Rotational} are:
\[\lambda_1 = \cdots = \lambda_{n-1} = \frac{1}{r}\cdot\frac{1}{(1+\dot{r}^2)^{1/2}}, \quad \lambda_n = -\frac{\ddot{r}}{(1+\dot{r}^2)^{3/2}}\]
where we denote $\dot{r} := \frac{dr}{dh}$ and $\ddot{r} := \frac{d^2r}{dh^2}$. By direct computations, we can also verify that:
\[\langle F, \nu\rangle = \frac{\dot{r}h - r}{(1+\dot{r}^2)^{1/2}}\]
where $\nu$ is again inward-pointing. Let $\rho : \Gamma \to \R_+$ be a positive, homogeneous of degree $1$, symmetric and $C^1$ function of principal curvatures defined on an open cone $\Gamma \subset \R^n$. The self-expander equation \eqref{eq:SelfSimilarInverse} to the inverse $\rho$-flow is reduced to an ODE:
\begin{equation}
\label{eq:ODE1}
\rho\left(1,\cdots,1,-\frac{r\ddot{r}}{1+\dot{r}^2}\right) = \frac{1}{\mu} \frac{r(1+\dot{r}^2)}{r-\dot{r}h}
\end{equation}
For self-expanders with $C^2$-asymptotically cylindrical ends, it is necessary that the constant $\mu$ matches with that of round cylinders. Therefore, we consider only the case where $\mu = \rho(1,\cdots,1,0)^{-1} > 0$, so that \eqref{eq:ODE1} can be written as:
\begin{equation}
\label{eq:ODE2}
\rho\left(1,\cdots,1,-\frac{r\ddot{r}}{1+\dot{r}^2}\right) = \rho(1,\cdots,1,0) (1 - \dot{r}Q)
\end{equation}
where $Q$ is defined by:
\begin{equation}
\label{eq:Q}
Q(h) := -\frac{r\dot{r}+h}{r-\dot{r}h} = -\frac{1}{2(r-\dot{r}h)}\frac{d}{dh}(r^2 + h^2).	
\end{equation}
By direct computations, it can be easily verified that:
\begin{equation}
\label{eq:1-r'Q}
1 - \dot{r}Q = \frac{r(1+\dot{r}^2)}{r-\dot{r}h}.
\end{equation}

Next we consider the quantity $\rho(1,\cdots,1,\xi)$ with an arbitrary $\xi$. Since $(1,\cdots,1,0) \in \Gamma$, we can only consider those $(\rho, \Gamma)$'s so that $(1,\cdots,1,\xi) \in \Gamma$ for any $\xi \in (-\alpha,\beta)$, where $(-\alpha,\beta)$ is an open interval containing $0$. For simplicity, define a function $\hat{\rho} : (-\alpha, \beta) \to (0, \gamma)$ given by
\[\hat{\rho}(\xi) := \rho(1,\cdots,1,\xi),\]
where $\gamma := \hat{\rho}_{\max}$.

In our construction of self-expanders, we will assume $\rho$ satisfies a uniform parabolic condition, in a sense that:
\[C \geq \pf{\hat\rho}{\xi} > 0 \quad \text{ on } (-\alpha,\beta).\]
We will further make a natural assumption that $\hat{\rho} \to 0$ as $\xi \to -\alpha$, and that $\beta > 1$ (for a technical reason). This assumption holds true if, for instance, $\rho = 0$ on $\partial\Gamma$ (then $\beta = \infty$ in this case).

Examples of $\rho$'s which fulfill these assumptions (including uniform parabolicity) include:
\begin{itemize}
\item $\rho = H$ on $\Gamma = \big\{\sum_i \lambda_i > 0\big\}$: i.e. the inverse mean curvature flow, where $\hat{\rho}(\xi) = (n-1) + \xi$ and $\pf{\hat{\rho}}{\xi} = 1$, so that $\alpha = n-1$ and $\beta = \infty$.
\item $\rho = \sigma_{k+1}/\sigma_k$ where $k < n-1$ on $\Gamma = \big\{\sigma_{k+1} > 0\big\}$: i.e. the inverse $Q_k$-flow, where
\[\hat{\rho}(\xi) = \frac{C^{n-1}_{k+1} + C^{n-1}_k \xi}{C^{n-1}_k + C^{n-1}_{k-1} \xi}, \quad \pf{\hat{\rho}}{\xi} = \frac{C^{n-1}_k C^{n-1}_k - C^{n-1}_{k-1}C^{n-1}_{k+1}}{\left(C^{n-1}_k + C^{n-1}_{k-1} \xi\right)^2},\]
so that we can pick $\alpha = C^{n-1}_{k+1} / C^{n-1}_k$, and $\beta = \infty$.
\item $\rho = \sigma_k/\sigma_{k-1} + \sum a_{ij}(\sigma_j/\sigma_i)^{1/j-i} + \sum_l b_l\sigma_l^{1/l}$ where $1 \leq i < j < k < n$ and $1 \leq l < k$, here $a_{ij}$ and $b_l$ are non-negative constants. By inspection of the graph of $\hat{\rho}$, we can take $-\alpha$ to be the root of $\hat{\rho}$ between where $\sigma_k(1,\cdots,1,\xi) = 0$ and where $\sigma_{k-1}(1,\cdots,1,\xi) \to 0$. It is uniformly parabolic since at $(1,\cdots,1,-\alpha)$, none of the $\sigma_{k-1}$, $\sigma_j$, $\sigma_i$, $\sigma_l$ goes to $0$.
\end{itemize}

The parabolic condition $\dfrac{\partial\rho}{\partial\lambda_n} > 0$ on $\Gamma$ implies that $\hat{\rho}$ is strictly increasing, and guarantee an inverse function $\hat{\rho}^{-1} : (0, \gamma) \to (-\alpha, \beta)$. Provided that 
\[\left(1,\cdots,1,-\dfrac{r\ddot{r}}{1+\dot{r}^2}\right) \in \Gamma\]
and $(1-\dot{r}Q)\hat{\rho}(0)$ is in the range of $\hat{\rho}$, i.e. $(0, \gamma)$, the ODE \eqref{eq:ODE2} is equivalent to:
\begin{equation}
\label{eq:ODE}
\ddot{r} = -\frac{(1+\dot{r}^2)}{r}\hat{\rho}^{-1}\bigg((1-\dot{r}Q)\hat{\rho}(0)\bigg)
\end{equation}

We will show that when the inverse $\rho$-flow is uniformly parabolic, then \eqref{eq:ODE} has a globally defined solution $r(h)$ on $h \in (-\infty,\infty)$ which geometrically resembles the shape of an \emph{infinite bottle}. Let us state our theorem precisely:

\begin{theorem}
\label{thm:InfiniteBottle}
Consider the inverse $\rho$-flow \eqref{eq:FlowInverse} where $\rho$ is a degree $1$ homogeneous symmetric $C^1$ function of principal curvatures $\rho(\lambda_1, \cdots, \lambda_n) : \Gamma \to \R$ such that:
\begin{itemize}
\item $(1, \cdots, 1, \xi) \in \Gamma$ for any $\xi \in (-\alpha, \beta)$, where $\alpha > 0$ and $\beta > 1$; and
\item $\rho(1,\cdots,1,\xi) \to 0$ as $\xi \to -\alpha$; and
\item there exists a constant $C > 0$ such that $C \geq \frac{\partial\rho}{\partial\lambda_n}(1,\cdots,1,\xi) > 0$ for any $\xi \in (-\alpha,\beta)$.
\end{itemize}
 Then, there exists a complete two-ended rotationally symmetric self-expander $\Sigma$ to the inverse $\rho$-flow, which is bounded between two co-axial cylinders of different radii, and $C^2$-asymptotic to these two cylinders at both ends.
\end{theorem}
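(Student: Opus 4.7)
The plan is to produce the desired self-expander as a globally defined solution of the profile ODE \eqref{eq:ODE} by a shooting argument parametrized by the slope $v=\dot r(h_0)$ at a fixed point $h_0$. The three ingredients are: a priori bounds giving global existence of $r_v(h)$ for $|v|$ small; linearization near cylinder solutions producing a simple explicit formula for the asymptotic radii to first order in $v$; and verification that these first-order asymptotic radii at the two ends differ.

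First I would set up the shooting. For any $r_*>0$, the constant function $r\equiv r_*$ solves \eqref{eq:ODE} (a cylinder). Fix $h_0\in\R$ and $r_0>0$, and let $r_v(h)$ denote the solution of \eqref{eq:ODE} with $r_v(h_0)=r_0$ and $\dot r_v(h_0)=v$. The uniform parabolicity hypothesis $C\ge\hat\rho'>0$ makes $\hat\rho^{-1}:(0,\gamma)\to(-\alpha,\beta)$ a $C^1$ diffeomorphism, so the right-hand side of \eqref{eq:ODE} is smooth on the admissible region and standard ODE theory provides local existence and $C^1$-dependence on $v$, with $r_0(h)\equiv r_0$. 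To extend $r_v$ globally for small $v$, I would construct barrier cylinders $r\equiv r_{\min}<r_0<r_{\max}$ within the admissible region and verify, using the sign of $\xi=-r\ddot r/(1+\dot r^2)$ against the range $(-\alpha,\beta)$ together with the hypotheses $\hat\rho(-\alpha)=0$ and $\beta>1$, that $r_v$ cannot cross these barriers; the upper bound $\hat\rho'\le C$ then prevents $\dot r$ from blowing up, yielding solutions defined on all of $\R$.

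Next I would analyse the behaviour at $h\to\pm\infty$. Linearizing \eqref{eq:ODE} about $r\equiv r_0$ by writing $r=r_0+\epsilon$ and using $Q=-h/r_0+O(\epsilon,\dot\epsilon)$ together with $\hat\rho^{-1}(y)\approx(y-\hat\rho(0))/\hat\rho'(0)$ gives, at first order in $(\epsilon,\dot\epsilon)$, the linearized equation
\[
\ddot\epsilon + c\,h\,\dot\epsilon \;=\; 0,\qquad c:=\frac{\hat\rho(0)}{r_0^{\,2}\,\hat\rho'(0)}>0,
\]
valid for all $h\in\R$. Its general solution $\dot\epsilon(h)=Ce^{-ch^2/2}$ decays super-exponentially, so every linearized perturbation has $\epsilon$ converging to a finite limit as $h\to\pm\infty$ with $|\dot\epsilon|=O(e^{-ch^2/2})$ and $|\ddot\epsilon|=O(|h|\,e^{-ch^2/2})$, which gives the required $C^2$-asymptotic convergence to a cylinder. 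Together with the invariant-strip bound, a comparison/bootstrap argument then shows that the nonlinear trajectories $r_v$ likewise converge: $r_v(h)\to r_\pm(v)\in[r_{\min},r_{\max}]$ as $h\to\pm\infty$, $C^2$-asymptotically to cylinders of those radii.

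Finally, I would show $r_+(v)\neq r_-(v)$ for some small $v\neq 0$. By $C^1$-dependence on $v$, $r_\pm(v)=r_0+v\,\eta_\pm+o(v)$, where $\eta_\pm=\lim_{h\to\pm\infty}\eta(h)$ and $\eta$ is the solution of the linearized ODE with $\eta(h_0)=0$ and $\dot\eta(h_0)=1$. Integrating explicitly,
\[
\eta(h)\;=\;e^{ch_0^2/2}\int_{h_0}^{h} e^{-cs^2/2}\,ds,
\]
so $\eta_+-\eta_-=e^{ch_0^2/2}\int_{-\infty}^{\infty}e^{-cs^2/2}\,ds=e^{ch_0^2/2}\sqrt{2\pi/c}>0$. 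Hence $r_+(v)-r_-(v)\sim v\sqrt{2\pi/c}\,e^{ch_0^2/2}\neq 0$ for any sufficiently small $v\neq 0$, and the corresponding $r_v$ furnishes the desired two-ended rotationally symmetric self-expander bounded between two co-axial cylinders of different radii. The main obstacle is the a priori estimates in the second paragraph: confining $r_v$ in an invariant strip and controlling $\dot r$ under the mild hypotheses of the theorem. This rests on a careful use of the upper bound $\hat\rho'\le C$ together with the boundary-of-domain conditions $\hat\rho(-\alpha)=0$ and $\beta>1$, which together ensure that the argument of $\hat\rho^{-1}$ in \eqref{eq:ODE} cannot reach the singular endpoints and force $\dot r$ to blow up.
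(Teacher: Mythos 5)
Your route (a perturbative shooting argument: linearize about a cylinder, compute the first-order change in the two asymptotic radii, and conclude they differ for small nonzero shooting slope $v$) is genuinely different from the paper's, which fixes a single \emph{non-perturbative} initial condition $\dot r(h_0)=\dot r_0\in(0,-h_0r_0^{-1})$ with $r_0>-h_0>0$ and then runs a qualitative phase analysis (sign of $Q$, a unique inflection point, integrating-factor bounds of the form $\arctan\dot r\pm\mathrm{const}\cdot\ln r$). Your linearized equation $\ddot\epsilon+ch\dot\epsilon=0$ and the formula $\eta_+-\eta_-=e^{ch_0^2/2}\sqrt{2\pi/c}$ are correct as formal computations, and the idea could in principle be closed. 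But as written there are three genuine gaps.

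First, the invariant-strip claim is wrong as stated: constant solutions $r\equiv r_*$ are not barriers. The uniqueness theorem for the second-order ODE only forbids a non-constant solution from touching a cylinder \emph{tangentially} (i.e.\ with $\dot r=0$); it does not prevent transversal crossings, and indeed the solution constructed in the paper is strictly increasing and crosses every cylinder of intermediate radius. So ``$r_v$ cannot cross these barriers'' does not follow, and the a priori confinement of $r_v$ --- which you yourself identify as the main obstacle --- is not established by this mechanism. The actual bounds require work: in the paper the lower bound comes from convexity plus $Q\geq Q(h_0)>0$ on $(h_{\min},h_0]$ and the estimate $\frac{d}{dh}\bigl(\arctan\dot r-\frac{\hat\rho(0)Q(h_0)}{C}\ln r\bigr)\geq 0$, and the upper bound from an analogous estimate after the inflection point; both use the \emph{upper} bound $\hat\rho'\leq C$ via the mean value theorem. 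Second, your final step needs $r_\pm(v)=r_0+v\eta_\pm+o(v)$, i.e.\ differentiability in $v$ of the \emph{asymptotic} radii. Standard $C^1$-dependence on parameters gives $|r_v(h)-r_0-v\eta(h)|=o(v)$ only uniformly on compact $h$-intervals; to pass to $h\to\pm\infty$ you must show the error is $o(v)$ uniformly in $h$, which requires an a priori Gaussian-type decay of $\dot r_v$ for the \emph{nonlinear} solution, uniformly for small $v$. This is the heart of the matter and is only gestured at (``a comparison/bootstrap argument''). Third, you never actually use, or explain the role of, the hypothesis $\beta>1$: its purpose in the paper is to keep $(1-\dot rQ)\hat\rho(0)$ strictly below $\gamma=\hat\rho_{\max}$ (via the observation that $\frac{d}{dh}(\dot rQ)>0$ whenever $-r\ddot r/(1+\dot r^2)=1$, so the quantity can never exceed $\hat\rho(1)<\gamma$); without some such mechanism the ODE can leave the domain where $\hat\rho^{-1}$ is defined and the existence theorem fails. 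Until these three points are supplied, the proof is incomplete.
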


\subsection{Outline of the proof}
We will establish the proof of Theorem \ref{thm:InfiniteBottle} in several steps. Here we describe our strategy, and compare our approach with Drugan, Lee, and Wheeler's construction of infinite-bottle in \cite{DLW} for the inverse mean curvature flow (the case $\rho = H$).

To construct such a self-expander, we will show the existence of a global solution $r(h) > 0$ that is strictly increasing as $h$ increases, and that $r(h)$ has finite and non-zero limit as $h \to \pm\infty$. It is intuitively clear that in order for $r(h)$ to behave in such a way, the existence of an inflection point $h_*$ at which $\ddot{r}(h_*) = 0$ is crucial. We will argue that by a careful choice of the initial condition, there is always a unique inflection point. This step is achieved in a similar fashion as in \cite{DLW} for the inverse mean curvature flow.

With the existence of a unique inflection point $h_*$, we can study the behaviors of the solution on intervals $(h_{\min}, h_*)$ and $(h_*, h_{\max})$ individually. We will show on either interval, the solution cannot extinct in finite $h$. To achieve this, there are new technical issues that were not present in \cite{DLW} where $\rho = H$. Namely, we need to ensure that $(1-\dot{r}Q)\hat{\rho}(0)$ stays within the range of $\hat{\rho}$, so that the existence theorem of ODE remains valid, and the solution obtained for \eqref{eq:ODE} is equivalent to the original self-expander ODE \eqref{eq:ODE1}. When $\rho = H$, the range of $\hat{\rho}$ is $(0, \infty)$. However, for many function $\rho$'s under our consideration such as $\rho = \frac{\sigma_{k+1}}{\sigma_k}$, the value of $\hat\rho$ is bounded from above. In this work, we will supply a new argument to rule out the possibility of $(1-\dot{r}Q)\hat{\rho}(0)$ approaching the end-points of the range of $\hat{\rho}$.

To show that $r(h)$ has a positive lower bound and a finite upper bound, the authors in \cite{DLW} uses the inverse function theorem to express $h$ as a function of $r$, and study the maximal existence interval $(r_{\min}, r_{\max})$ for $h(r)$. We will instead show $r$ is bounded directly.

\subsection{Initial conditions}
From now on, we will fix the following ``initial'' conditions\footnote{We put \emph{initial} in quote because we study the ODE solution defined on both $(h_{\min}, h_0]$ and $[h_0, h_{\max})$. From now on we will call them ``conditions'' only.} for the ODE \eqref{eq:ODE}:
\begin{align}
\label{eq:Initial_h} h_0 & < 0\\
\label{eq:Initial_r} r(h_0) & = r_0 > -h_0\\
\label{eq:Initial_r'} \dot{r}(h_0) & = \dot{r}_0 \in (0, -h_0 r_0^{-1})
\end{align}
The geometric meaning of \eqref{eq:Initial_h} and \eqref{eq:Initial_r} is that the solution starts at the region bounded between the $r$-axis and the line $r + h = 0$ in the $(r,h)$-plane (see the yellow region in Figure \ref{fig:Initial}). The condition \eqref{eq:Initial_r'} for $\dot{r}$ is implies
\[\left.\frac{d}{dh}\right|_{h=h_0}(r^2 + h^2) = 2(r_0\dot{r}_0 + h_0) < 0,\]
meaning that at $(r_0, h_0)$ the solution $r(h)$ goes into the ball $r^2 + h^2 \leq r_0^2 + h_0^2$ as $h$ increases (see the red segment in Figure \ref{fig:Initial}).

\begin{figure}[h!]
\centering
\begin{tikzpicture}
	\filldraw[yellow!50] (0,0) -- (0,3) -- (-3,3) -- (0,0);
	\draw[dashed] (0,0) circle (2);
	\draw[dashed] (3,-3) -- (-3,3);
	\draw[thick,-latex] (-3,0) -- (3,0) node[anchor=west]{$h$};
	\draw[thick,-latex] (0,-3) -- (0,3) node[anchor=south]{$r$};
	\filldraw (-1,0) circle (1pt);
	\draw[dashed] (-1,1.732) -- (-1,0) node[anchor=north]{$h_0$};
	\filldraw (-1, 1.732) circle (1pt);
	\draw[dashed] (-1, 1.732) -- (0, 1.732) node[anchor=west]{$r_0$};
	\draw[thick,red,smooth] (-1-0.3, 1.732-0.05) -- (-1,1.732) -- (-1+0.3,1.732+0.08);
	
\end{tikzpicture}
\caption{Initial conditions: $r_0 + h_0 > 0$, and $\left.\frac{d}{dh}\right|_{h_0}(r^2 + h^2) < 0$.}
\label{fig:Initial}
\end{figure}
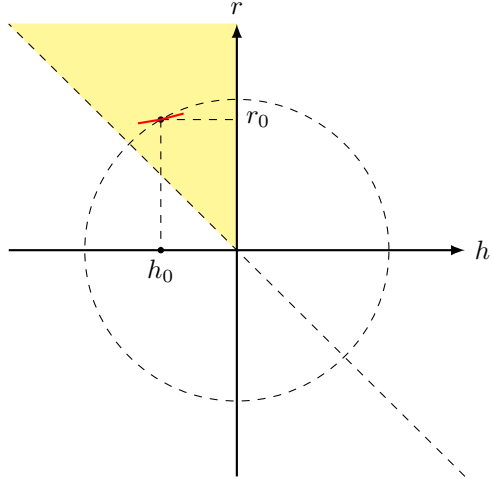

Assuming the conditions \eqref{eq:Initial_h}-\eqref{eq:Initial_r'}, the quotient $Q$ defined in \eqref{eq:Q} is initially positive at $h_0$. As such, $(1-\dot{r}Q)\hat{\rho}(0) < \hat{\rho}(0) < \gamma$ at $h = h_0$. Furthermore, by \eqref{eq:1-r'Q} we know $1-\dot{r}Q > 0$ at $h = h_0$. Therefore, the term $(1-\dot{r}Q)\hat{\rho}(0)$ lies in the range of $\hat{\rho}$ and so the term
\[\hat{\rho}^{-1}\bigg((1-\dot{r}Q)\hat{\rho}(0)\bigg)\]
in the ODE \eqref{eq:ODE} is well-defined. Thus, the existence theorem of ODE (see e.g. \cite[Theorem I.3.1]{CL55}) shows there exists a unique solution defined near $h = h_0$ which satisfies conditions \eqref{eq:Initial_h}-\eqref{eq:Initial_r'}. From now on, we denote $(h_{\min}, h_{\max})$ to be the maximal existence interval of the ODE \eqref{eq:ODE} with conditions \eqref{eq:Initial_h}-\eqref{eq:Initial_r'}.

\subsection{Monotonicity and convexity}
It is easy to observe that all constant solutions $r(h) \equiv c > 0$ satisfy the ODE \eqref{eq:ODE}. These constant solutions give round cylinders after rotating about the $h$-axis. Note that the ODE \eqref{eq:ODE} is of second-order, so the uniqueness theorem of ODE (see e.g. \cite[Theorem I.3.1]{CL55}) asserts that if two solutions to the ODE passing through the same point with the same derivative at that point, then the two solutions must be identically the same. Using this observation, we can claim that whenever a solution $r(h)$ achieves an interior critical point at $\bar{h} \in (h_{\min}, h_{\max})$ so that $\dot{r}(\bar{h}) = 0$, it is necessary that $r(h)$ is a constant solution. As such, in order to construct self-expander solution which is distinct from any round cylinder, it is necessary that the solution cannot have any critical point (i.e. either strictly increasing or strictly decreasing). By the condition \eqref{eq:Initial_r'}, we have $\dot{r}_0 > 0$ and so it is determined that $\dot{r} > 0$ on the maximal interval $(h_{\min}, h_{\max})$.

Concerning the second derivative $\ddot{r}$, it is useful to observe that the quotient $Q$ defined in \eqref{eq:Q} always have the same sign as $\ddot{r}$, by the parabolic condition of $\rho$:

\begin{lemma}
\label{lma:Qr''}
Suppose $r$ is a solution of \eqref{eq:ODE} with conditions \eqref{eq:Initial_h}-\eqref{eq:Initial_r'}. Then at any $h \in (h_{\min}, h_{\max})$, we have $Q(h) > 0$ if and only if $\ddot{r}(h) > 0$. Also, $Q(h) = 0$ if and only if $\ddot{r}(h) = 0$.
\end{lemma}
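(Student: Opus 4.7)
The plan is to read off the sign of $\ddot r$ directly from the ODE \eqref{eq:ODE} using two ingredients: the strict monotonicity of $\hat\rho$ (hence of $\hat\rho^{-1}$) coming from the parabolic hypothesis $\partial\rho/\partial\lambda_n>0$, and the strict positivity of $\dot r$ on the maximal existence interval that was established in the previous paragraph via the uniqueness of ODE solutions and the initial condition $\dot r_0>0$.

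The argument proceeds as follows. From \eqref{eq:ODE} we have
\[
\ddot r \;=\; -\,\frac{1+\dot r^{2}}{r}\,\hat\rho^{-1}\!\big((1-\dot r Q)\hat\rho(0)\big),
\]
and since $r>0$ and $1+\dot r^{2}>0$, the sign of $\ddot r$ is opposite to the sign of $\hat\rho^{-1}\!\big((1-\dot r Q)\hat\rho(0)\big)$. Because $\hat\rho$ is strictly increasing on $(-\alpha,\beta)$ with $\hat\rho^{-1}(\hat\rho(0))=0$, the quantity $\hat\rho^{-1}\!\big((1-\dot r Q)\hat\rho(0)\big)$ is positive, zero, or negative according as $(1-\dot r Q)\hat\rho(0)$ is greater than, equal to, or less than $\hat\rho(0)$. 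Dividing by the positive number $\hat\rho(0)$, this reduces to comparing $1-\dot r Q$ with $1$, i.e.\ to the sign of $-\dot r Q$. Since $\dot r>0$ throughout $(h_{\min},h_{\max})$, the sign of $-\dot r Q$ is opposite to the sign of $Q$. Combining the two sign flips, $\ddot r$ and $Q$ have the same sign, and $\ddot r=0$ exactly when $Q=0$.

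There is essentially no obstacle here: one only needs to verify that the argument $(1-\dot r Q)\hat\rho(0)$ lies in the range $(0,\gamma)$ of $\hat\rho$ so that $\hat\rho^{-1}$ is legitimately defined at that point. This is guaranteed on $(h_{\min},h_{\max})$ by the very definition of the maximal existence interval for \eqref{eq:ODE}, so the chain of equivalences above is valid at every $h\in(h_{\min},h_{\max})$. The lemma then follows immediately, and it will serve later to translate the geometric condition $Q=0$ (i.e.\ $r\dot r+h=0$, the inflection locus in the $(r,h)$-plane) into the analytic condition $\ddot r=0$ used to detect the unique inflection point of the profile curve.
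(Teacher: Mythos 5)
Your proof is correct and follows essentially the same route as the paper's: both read the sign of $\ddot{r}$ off the ODE \eqref{eq:ODE} using the strict monotonicity of $\hat{\rho}^{-1}$, the positivity of $\hat{\rho}(0)$, and the fact that $\dot{r}>0$ on the maximal interval. Your presentation as a single chain of sign equivalences (rather than the paper's two separate implications) is a minor stylistic streamlining, not a different argument.
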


\begin{proof}
Recall that $\dot{r} > 0$. If $Q(h) > 0$, we have $[1 - \dot{r}(h)Q(h)]\hat{\rho}(0) < \hat{\rho}(0)$. Both $\hat{\rho}$ and its inverse $\hat{\rho}^{-1}$ are strictly increasing, so by the ODE \eqref{eq:ODE}, we have:
\[\ddot{r}(h) > -\frac{1+\dot{r}^2(h)}{r(h)} \hat{\rho}^{-1}\bigg(\hat{\rho}(0)\bigg) = 0.\]
Conversely, if $\ddot{r}(h) > 0$, then the ODE \eqref{eq:ODE} shows
\[\hat{\rho}^{-1}\bigg([1-\dot{r}(h)Q(h)]\hat{\rho}(0)\bigg) < 0,\]
which implies $[1-\dot{r}(h)Q(h)]\hat{\rho}(0) < \hat{\rho}(0)$. As $(1,\cdots,1,0) \in \Gamma$ and $\dot{r} > 0$, we have $Q(h) > 0$. Similarly, we can show in the same way that $Q(h) = 0$ if and only if $\ddot{r}(h) = 0$.
\end{proof}

Since the quotient $Q$ can be used to detect the convexity or concavity of $r$, it is helpful to investigate its derivative. By direct computations, we can show:
\begin{align}
\label{eq:Q'}
\dot{Q} & =	-\frac{(r-\dot{r}h)(\dot{r}^2 + r\ddot{r} + 1) + (r\dot{r}+h) \ddot{r} h}{(r-\dot{r}h)^2}\\
\nonumber & = -\frac{1+\dot{r}^2}{r-\dot{r}h} \left(1 + \frac{r\ddot{r}}{1+\dot{r}^2}\right) + \ddot{r}Q \cdot \frac{h}{r - \dot{r}h}.
\end{align}

\subsection{Behavior on $(h_{\min}, h_0]$}
We will now begin the proof of Theorem \ref{thm:InfiniteBottle}. First we discuss the behavior of the solution $r(h)$ when $h \leq h_0$.
\begin{lemma}
\label{lemma:noinflectionpoint}
There is no inflection point of $r$ on $(h_{\min}, h_0]$.
\end{lemma}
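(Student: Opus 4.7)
The plan is to argue by contradiction, using the sign correspondence between $\ddot{r}$ and $Q$ from Lemma~\ref{lma:Qr''} together with the explicit formula \eqref{eq:Q'} for $\dot{Q}$.

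First I would record the initial convexity at $h_0$. The initial conditions $h_0 < 0$, $r_0 > 0$, and $\dot{r}_0 \in (0, -h_0 r_0^{-1})$ yield $r_0 \dot{r}_0 + h_0 < 0$, while $r_0 - \dot{r}_0 h_0 > 0$ since $\dot{r}_0 > 0$ and $h_0 < 0$. Hence $Q(h_0) > 0$, and Lemma~\ref{lma:Qr''} immediately gives $\ddot{r}(h_0) > 0$. Combined with the monotonicity discussion preceding the lemma, we also have $\dot{r} > 0$ on the entire maximal interval $(h_{\min}, h_{\max})$, a fact I will use later.

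Next, suppose for contradiction that there is an inflection point on $(h_{\min}, h_0]$. Because $\ddot{r}(h_0) > 0$ the inflection must lie strictly to the left of $h_0$, so I would set
\[
h_* := \sup\{\, h \in (h_{\min}, h_0) : \ddot{r}(h) \leq 0\,\}.
\]
Continuity of $\ddot{r}$ forces $h_* < h_0$, $\ddot{r}(h_*) = 0$, and $\ddot{r} > 0$ throughout $(h_*, h_0]$. Translating through Lemma~\ref{lma:Qr''}, this reads $Q(h_*) = 0$ and $Q > 0$ on $(h_*, h_0]$, which in turn forces $\dot{Q}(h_*) \geq 0$.

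The contradiction then comes by plugging $\ddot{r}(h_*) = 0$ and $Q(h_*) = 0$ into \eqref{eq:Q'}. Both transport terms vanish and the formula collapses to
\[
\dot{Q}(h_*) = -\frac{1 + \dot{r}(h_*)^2}{r(h_*) - \dot{r}(h_*)\,h_*}.
\]
Since $r(h_*) > 0$, $\dot{r}(h_*) > 0$, and $h_* < h_0 < 0$, the denominator is strictly positive, so $\dot{Q}(h_*) < 0$, contradicting $\dot{Q}(h_*) \geq 0$. Thus no inflection point exists on $(h_{\min}, h_0]$.

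The only point of care — and what I would expect to be the main (though minor) obstacle — is verifying that at the candidate inflection point both $\ddot{r}(h_*) = 0$ and $Q(h_*) = 0$ hold simultaneously, so that the two non-sign-definite pieces of \eqref{eq:Q'} drop out and the sign of $\dot{Q}(h_*)$ is forced by the single remaining term. This is exactly what Lemma~\ref{lma:Qr''} supplies, and combined with the positivity of $\dot{r}$ and the negativity of $h_*$, it yields the sharp contradiction.
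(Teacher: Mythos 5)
Your proof is correct and follows essentially the same route as the paper's: both isolate the first inflection point $h_*$ to the left of $h_0$, use Lemma~\ref{lma:Qr''} to translate between the signs of $\ddot{r}$ and $Q$, and extract a contradiction from \eqref{eq:Q'}. The only (immaterial) difference is that the paper shows $\dot{Q}<0$ on all of $(h_*,h_0]$ and integrates to get $Q(h_*)\geq Q(h_0)>0$, contradicting $Q(h_*)=0$, whereas you evaluate $\dot{Q}$ pointwise at $h_*$ and contradict the one-sided inequality $\dot{Q}(h_*)\geq 0$.
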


\begin{proof}
We argue by contradiction. Recall that $Q(h_0) > 0$ by \eqref{eq:Initial_h}-\eqref{eq:Initial_r'}, and so $\ddot{r}(h_0) > 0$ by Lemma \ref{lma:Qr''}. If there exists $h_*\in (h_{\min}, h_0]$ such that $\ddot{r}>0$ on $(h_*, h_0]$ and $\ddot{r}(h_*)=0$, then by  checking the signs of terms in \eqref{eq:Q'}, we have
\[\dot{Q} =	-\frac{1+\dot{r}^2}{r-\dot{r}h} \left(1 + \frac{r\ddot{r}}{1+\dot{r}^2}\right) + \ddot{r}Q \cdot \frac{h}{r - \dot{r}h} < 0\]
on $(h_*, h_0]$, which implies $Q(h_*)\geq Q(h_0)>0$. However, the hypothesis $\ddot{r}(h_*)=0$ implies $Q(h_*)=0$ again by Lemma \ref{lma:Qr''}, leading to a contradiction. Therefore, we have $\ddot{r}(h)>0$ and $Q(h)>0$ on $(h_{\min}, h_0]$.
\end{proof}

\begin{lemma}
\label{lemma:lowerbound}
There exists $c > 0$ such that $r(h) \geq c$ on $(h_{\min}, h_0]$.	
\end{lemma}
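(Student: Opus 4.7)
The plan is to combine Lemma~\ref{lemma:noinflectionpoint}---which gives $\ddot{r}>0$ and $Q>0$ throughout $(h_{\min},h_0]$---with the uniform parabolicity hypothesis to derive a pointwise lower bound of the form $\ddot{r} \geq \text{const}\cdot\dot{r}/r$, and then to integrate this ODE inequality over $[h,h_0]$ to produce a positive lower bound for $r$.

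First I would show that $Q$ is strictly decreasing on $(h_{\min},h_0]$, so that $Q(h)\geq Q_0:=Q(h_0)$ there. Indeed, in the formula \eqref{eq:Q'} for $\dot{Q}$, the first summand is negative because $r-\dot{r}h>0$ (as $h<0$ and $\dot{r}>0$) and $1+r\ddot{r}/(1+\dot{r}^2)>1$, while the second is negative because $\ddot{r}Q>0$ and $h<0$. The initial conditions \eqref{eq:Initial_h}--\eqref{eq:Initial_r'} give $Q_0>0$, so $Q(h)\geq Q_0>0$ throughout.

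Next I would use the uniform parabolicity $0<\hat{\rho}'(\xi)\leq C$, which implies $(\hat{\rho}^{-1})'(y)\geq 1/C$ on $(0,\gamma)$. Applying the mean value theorem to $\hat{\rho}^{-1}$ on the interval $[(1-\dot{r}Q)\hat{\rho}(0),\hat{\rho}(0)]$ (which lies in $(0,\gamma)$ since $0<1-\dot{r}Q<1$) gives
\begin{equation*}
-\hat{\rho}^{-1}\bigl((1-\dot{r}Q)\hat{\rho}(0)\bigr) \;\geq\; \frac{\dot{r}\,Q\,\hat{\rho}(0)}{C}.
\end{equation*}
Substituting into the ODE \eqref{eq:ODE} and using $Q\geq Q_0$ yields
\begin{equation*}
\ddot{r} \;\geq\; \frac{(1+\dot{r}^2)\dot{r}\,Q\,\hat{\rho}(0)}{C\,r} \;\geq\; \frac{Q_0\,\hat{\rho}(0)}{C}\cdot\frac{\dot{r}}{r}.
\end{equation*}
Integrating this inequality from $h$ to $h_0$ for arbitrary $h\in(h_{\min},h_0]$ gives
\begin{equation*}
\dot{r}_0 - \dot{r}(h) \;\geq\; \frac{Q_0\,\hat{\rho}(0)}{C}\,\ln\frac{r_0}{r(h)},
\end{equation*}
and since $\dot{r}(h)>0$, the left-hand side is at most $\dot{r}_0$, hence
\begin{equation*}
r(h) \;\geq\; r_0\exp\!\left(-\frac{C\,\dot{r}_0}{Q_0\,\hat{\rho}(0)}\right) \;=:\; c \;>\; 0.
\end{equation*}

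The essential conceptual step is the use of uniform parabolicity to convert the implicit $\hat{\rho}^{-1}$ into an explicit linear lower bound; without a uniform upper bound on $\hat{\rho}'$, the inverse could be arbitrarily steep near $\hat{\rho}(0)$ and the integrated estimate would fail to close up to a positive lower bound on $r$. Note also that the convexity of $r$ alone is not sufficient, because the tangent-line bound $r(h)\geq r_0+\dot{r}_0(h-h_0)$ becomes non-positive once $h<h_0-r_0/\dot{r}_0$, and we have no a priori control on how far $h_{\min}$ reaches; it is precisely the ODE-driven bound on $\ddot{r}$ that forces $r$ to stay away from zero uniformly.
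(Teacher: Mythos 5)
Your proof is correct and follows essentially the same route as the paper: both establish $Q \geq Q(h_0) > 0$ via $\dot{Q}<0$, use the mean value theorem together with the uniform bound $\hat{\rho}' \leq C$ to extract the differential inequality $\ddot{r} \gtrsim \dot{r}/r$, and integrate to get a positive lower bound on $r$. The only cosmetic differences are that the paper applies the mean value theorem to $\hat{\rho}$ rather than to $\hat{\rho}^{-1}$, and integrates $\ddot{r}/(1+\dot{r}^2) \geq K\dot{r}/r$ (using $\arctan\dot{r} > -\pi/2$) where you drop the $1+\dot{r}^2$ factor and use $\dot{r}>0$ instead; both closures are valid.
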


\begin{proof}
By rearranging \eqref{eq:ODE}, we can get:
\begin{equation*}
\hat{\rho}\left(-\dfrac{r\ddot{r}}{1+\dot{r}^2}\right)- \hat{\rho}(0)= -\hat{\rho}(0)\dot{r}Q.
\end{equation*}
We apply mean value theorem on the LHS, so there exists $\varsigma \in (-\frac{r\ddot{r}}{1+\dot{r}^2}, 0)$ such that
\begin{equation}
\label{eq:MVT1}
\dfrac{\partial \hat{\rho}}{\partial \xi}(\varsigma)\left(-\dfrac{r\ddot{r}}{1+\dot{r}^2}\right) =\hat{\rho}\left(-\dfrac{r\ddot{r}}{1+\dot{r}^2}\right)- \hat{\rho}(0) = -\hat{\rho}(0)\dot{r}Q.
\end{equation}
Since $\ddot{r} > 0$ on $(h_{\min}, h_0]$, we know from \eqref{eq:Q'} that $\dot{Q} < 0$ on $(h_{\min}, h_0]$, and so $Q(h) \geq Q(h_0) > 0$ on $(h_{\min}, h_0]$. Combining the uniform parabolic condition on $\rho$, we can infer from \eqref{eq:MVT1} that
\[C\dfrac{\ddot{r}}{1+\dot{r}^2}  \geq \dfrac{\partial \hat{\rho}}{\partial \xi}(\varsigma)\left(\dfrac{\ddot{r}}{1+\dot{r}^2}\right)  = \hat{\rho}(0)Q\dfrac{\dot{r}}{r} \geq \hat{\rho}(0)Q(h_0)\dfrac{\dot{r}}{r}.\]
Equivalently, we have
\begin{align*}
\dfrac{d}{dh}\left(\arctan \dot{r} - \dfrac{\hat{\rho}(0)Q(h_0)}{C} \ln r\right)\geq 0
\end{align*}
on $(h_{\min}, h_0]$, which implies
\begin{align*}
\nonumber & \arctan \dot{r} - \dfrac{\hat{\rho}(0)Q(h_0)}{C} \ln r \leq \arctan \dot{r_0} - \dfrac{\hat{\rho}(0)Q(h_0)}{C} \ln r_0 =:C_1  \\
\implies & r \geq \exp\left(\frac {-C(\frac{\pi}{2}+C_1)}{\hat{\rho}(0)Q(h_0)}\right) =: c > 0,
\end{align*}
on $(h_{\min}, h_0]$ as desired. Here we have used the fact that $\arctan \dot{r} > -\dfrac{\pi}{2}$.
\end{proof}

\begin{lemma}
\label{lemma:hmin}
$h_{\min} = -\infty$.	
\end{lemma}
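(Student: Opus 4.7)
I would argue by contradiction: assume $h_{\min} > -\infty$. By the standard ODE continuation theorem applied to \eqref{eq:ODE} written as a first-order system in $(r,\dot{r})$, it suffices to show that on $(h_{\min}, h_0]$ the quantities $r, \dot{r}, \ddot{r}$ remain bounded \emph{and} that the argument $(1-\dot{r}Q)\hat{\rho}(0)$ of $\hat{\rho}^{-1}$ stays within a compact subinterval of the range $(0, \gamma)$ of $\hat{\rho}$, so that the right-hand side of \eqref{eq:ODE} stays bounded and uniformly away from any singularity of $\hat{\rho}^{-1}$.

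The bounds on $r$ and $\dot{r}$ are essentially free from what has already been proved. By Lemma \ref{lemma:noinflectionpoint}, $\ddot{r} > 0$ on $(h_{\min}, h_0]$, so $\dot{r}$ is increasing in $h$, giving $0 < \dot{r}(h) \leq \dot{r}_0$ (positivity follows from the uniqueness argument ruling out interior critical points). Since $\dot{r} > 0$, $r$ is itself increasing in $h$, so $r(h) \leq r_0$; combined with Lemma \ref{lemma:lowerbound} we get $c \leq r(h) \leq r_0$.

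The substantive step is controlling $(1-\dot{r}Q)\hat{\rho}(0)$. The upper bound $(1-\dot{r}Q)\hat{\rho}(0) < \hat{\rho}(0) < \gamma$ is immediate from $\dot{r}>0$ and $Q>0$. For the lower bound I would use the identity \eqref{eq:1-r'Q}:
\[
1 - \dot{r}Q \;=\; \frac{r(1+\dot{r}^2)}{r - \dot{r}h} \;\geq\; \frac{c}{r_0 + \dot{r}_0\,|h_{\min}|} \;>\; 0,
\]
where I use $r \geq c$ and $1+\dot{r}^2 \geq 1$ in the numerator, and $r - \dot{r}h \leq r_0 + \dot{r}_0 |h| \leq r_0 + \dot{r}_0 |h_{\min}|$ in the denominator (this is exactly where the finiteness hypothesis $|h_{\min}|<\infty$ is consumed). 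Hence $(1-\dot{r}Q)\hat{\rho}(0)$ lies in a fixed compact subinterval of $(0,\gamma)$, so $\hat{\rho}^{-1}\big((1-\dot{r}Q)\hat{\rho}(0)\big)$ and therefore $\ddot{r}$ (via \eqref{eq:ODE}) are bounded. The ODE continuation theorem then extends the solution past $h_{\min}$, contradicting its maximality.

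The main obstacle is precisely the lower bound on $1 - \dot{r}Q$. In the case $\rho = H$ of \cite{DLW} the range of $\hat{\rho}$ is $(0,\infty)$ and this issue does not arise, but in our general setting (e.g.\ $\rho = \sigma_{k+1}/\sigma_k$) one must rule out $(1-\dot{r}Q)\hat{\rho}(0) \to 0^+$, which is the ``new technical issue'' flagged in the authors' outline; the closed-form identity \eqref{eq:1-r'Q} together with the standing contradiction hypothesis $|h_{\min}|<\infty$ is what allows the argument to close.
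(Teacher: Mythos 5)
Your argument is correct and follows essentially the same route as the paper: both establish that $r$, $\dot r$, and $r-\dot r h$ stay in compact sets and that $(1-\dot rQ)\hat\rho(0)$ remains in a compact subinterval of $(0,\gamma)$, then invoke ODE continuation to contradict the maximality of $(h_{\min},h_{\max})$. The only cosmetic difference is your upper bound $r-\dot rh\le r_0+\dot r_0\abs{h_{\min}}$, which consumes the finiteness of $h_{\min}$, whereas the paper gets the $h_{\min}$-independent bound $r-\dot rh\le r_0-\dot r_0h_0$ from the monotonicity $\frac{d}{dh}(r-\dot rh)=-\ddot rh>0$; both suffice.
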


\begin{proof}
According to the expression of the ODE \eqref{eq:ODE}, if $h_{\min}$ is \emph{finite}, then at least one of the following must occur:
\begin{enumerate}[(i)]
\item $r \to 0$ or $\infty$ as $h \to h_{\min}^+$.
\item $\dot{r} \to \infty$ as $h \to h_{\min}^+$.
\item $r-\dot{r}h \to 0$ or $\infty$ as $h \to h_{\min}^+$.
\item $(1-\dot{r}Q)\hat{\rho}(0) \to 0$ or $\gamma$ as $h \to h_{\min}^+$.
\end{enumerate}
We will exclude all possibilities listed above one-by-one.

By Lemma \ref{lemma:lowerbound}, we have $r \geq c > 0$ on $(h_{\min}, h_0]$, where $c$ is independent of $h_{\min}$, ruling out the possibility of $r \to 0$ as $h \to h_{\min}^+$. Since $\dot{r} > 0$ on $(h_{\min}, h_{\max})$, we have $r(h) \leq r(h_0)$ on $(h_{\min}, h_0]$. Therefore, (i) is impossible.

Next, by Lemma \ref{lemma:noinflectionpoint}, we have $\ddot{r} > 0$ on $(h_{\min}, h_0]$, which implies (ii) is impossible.

For (iii), by checking
\begin{align*}
\dfrac{d}{dh}\left(r-\dot{r}h\right) = -\ddot{r}h > 0,
\end{align*}
we obtain an upper bound for $r-\dot{r}h$ on $(h_{\min}, h_0]$:
\begin{equation*}
r-\dot{r}h \leq r_0-\dot{r}_0h_0. 
\end{equation*}
Also, we have a lower bound of $r-\dot{r}h$
\begin{equation*}
r-\dot{r}h > r \geq c > 0. 
\end{equation*}
Hence, (iii) is impossible.

Lastly, since $\dot{Q} < 0$ on $(h_{\min}, h_0]$, and so $Q(h) \geq Q(h_0) > 0$ on $(h_{\min}, h_0]$, we get:
\begin{align*}
(1-\dot{r}Q)\hat{\rho}(0) < \hat{\rho}(0)< \gamma.
\end{align*}
As $r - \dot{r}h \leq r_0-\dot{r}_0h_0$ and $r(1+\dot{r}^2) > r \geq c > 0$ on $(h_{\min}, h_0]$, then
\begin{align*}
(1-\dot{r}Q)\hat{\rho}(0) = \dfrac{r(1+\dot{r}^2)}{r-\dot{r}h}\hat{\rho}(0) \geq \dfrac{c}{r_0-\dot{r}_0h_0}\hat{\rho}(0) > 0,
\end{align*}
These show (iv) is impossible.

Therefore, $h_{\min} = -\infty$.
\end{proof}

To conclude, the solution to the ODE \eqref{eq:ODE} with conditions \eqref{eq:Initial_h}-\eqref{eq:Initial_r'} is defined on $(-\infty, h_0]$, on which it is strictly increasing, concave up, and bounded from below (see the red curve in Figure \ref{fig:InfiniteBottle}).

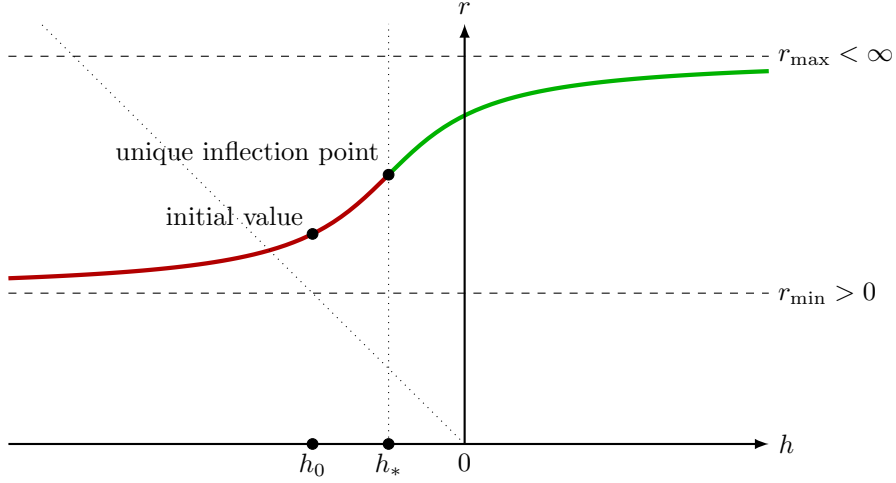
\begin{figure}[h!]
\begin{tikzpicture}
	\draw[ultra thick,domain=-5:0,smooth,variable=\x, color=black!30!red] plot({\x},{rad(atan(\x))+2});
	\draw[ultra thick,domain=0:5,smooth,variable=\x, color=black!30!green] plot({\x},{rad(atan(\x))+2});
	\draw[dashed] (-5,2-pi/2) -- (5,2-pi/2) node[anchor=west]{$r_{\min} > 0$};
	\draw[dashed] (-5,2+pi/2) -- (5,2+pi/2) node[anchor=west]{$r_{\max} < \infty$};
	\draw[thick,-latex] (-5,-pi/2) -- (5,-pi/2) node[anchor=west]{$h$};
	\draw[thick,-latex] (1,-pi/2) node[anchor=north]{$0$} -- (1,4) node[anchor=south]{$r$};
	\draw[dotted] (0,-pi/2) -- (0,4);
	\filldraw (-1,-pi/4+2) node[anchor=south east]{initial value} circle (2pt);
	\filldraw (-1,-pi/2) node[anchor=north]{$h_0$} circle (2pt);
	\filldraw (0,2) node[anchor=south east]{unique inflection point} circle (2pt);
	\filldraw (0,-pi/2) node[anchor=north]{$h_*$} circle (2pt);
	\draw[dotted] (1,-pi/2) -- (-3-pi/2,4);
\end{tikzpicture}
\caption{A plot of the profile curve}
\label{fig:InfiniteBottle}
\end{figure}

\subsection{Behavior on $[h_0, h_{\max})$}

We next find out the behavior of the solution on $[h_0, h_{\max})$. We will show there is a unique inflection point $h_*$, after which the solution becomes concave down. The solution extends globally to $[h_0, +\infty)$, and the solution is bounded from above (see the green curve in Figure \ref{fig:InfiniteBottle}).

\begin{lemma}
\label{lma:inflectionpoint}
Any inflection point $h_*$ of $r$ must be negative, and there exists a unique inflection point $h_* \in (h_0, 0)$.
\end{lemma}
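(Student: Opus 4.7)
The plan splits into three steps: any inflection lies at negative $h$, uniqueness of such an inflection, and existence in $(h_0, 0)$.

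\textbf{Negativity.} By Lemma \ref{lma:Qr''}, an inflection $h_*$ is characterized by $Q(h_*) = 0$, which via \eqref{eq:Q} amounts to $r(h_*)\dot r(h_*) + h_* = 0$. Since $r, \dot r > 0$, this forces $h_* = -r(h_*)\dot r(h_*) < 0$. Together with Lemma \ref{lemma:noinflectionpoint}, any inflection must lie in $(h_0, 0)$.

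\textbf{Uniqueness.} At any inflection $h_k \in (h_0, 0)$, substituting $\ddot r(h_k) = 0$ into \eqref{eq:Q'} and using $h_k < 0$ (so $r - \dot r h_k > r > 0$) gives
\[
\dot Q(h_k) = -\frac{1+\dot r(h_k)^2}{r(h_k) - \dot r(h_k)\, h_k} < 0.
\]
Hence $Q$ strictly decreases through each of its zeros. If two inflections $h_1 < h_2$ existed, then $Q$ would be negative immediately after $h_1$ and positive immediately before $h_2$, forcing by continuity a third zero $h_3 \in (h_1, h_2)$ where $Q$ transitions from negative to positive; but the same calculation yields $\dot Q(h_3) < 0$, a contradiction.

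\textbf{Existence.} Suppose for contradiction $\ddot r > 0$ on all of $[h_0, h_{\max})$, hence $Q > 0$ throughout by Lemma \ref{lma:Qr''}. The key geometric observation is that on the portion $h < 0$, since $r - \dot r h > 0$, the condition $Q > 0$ is equivalent to $r\dot r + h < 0$, i.e., $\frac{d}{dh}(r^2 + h^2) < 0$. Integrating from $h_0$ confines the profile curve inside $\{r^2 + h^2 < r_0^2 + h_0^2\}$, giving $r_0 \leq r < \sqrt{r_0^2 + h_0^2}$ and $0 < \dot r < -h/r \leq |h_0|/r_0 < 1$ (the last inequality being \eqref{eq:Initial_r}). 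Combined with $r - \dot r h \geq r_0$ and $(1-\dot r Q)\hat\rho(0) = r(1+\dot r^2)\hat\rho(0)/(r - \dot r h)$ staying in a compact subset of $(0, \hat\rho(0))$, the four-case analysis of Lemma \ref{lemma:hmin} rules out any finite $h_{\max} \leq 0$. Therefore $h_{\max} > 0$; but then continuity gives $Q(0) = -\dot r(0) < 0$, contradicting $Q > 0$ on $[h_0, 0)$. This existence step is the main obstacle---the clever point is recognizing that $Q > 0$ for $h < 0$ is exactly $\frac{d}{dh}(r^2 + h^2) < 0$, which confines the profile to a bounded disk a priori and bypasses any need for Gronwall estimates on $\arctan \dot r$ or the uniform parabolicity of $\rho$.
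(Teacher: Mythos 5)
Your proof is correct and follows essentially the same route as the paper's: negativity via $Q(h_*)=0$, uniqueness via $\dot Q<0$ at every zero of $Q$, and existence by assuming $\ddot r>0$ throughout, deriving a priori bounds from $Q>0$ to exclude finite-time blow-up, and then contradicting $Q(0)=-\dot r(0)<0$. The only cosmetic difference is that you package the a priori bounds as confinement to the disk $r^2+h^2<r_0^2+h_0^2$ while the paper uses $r\dot r<-h_0$ directly (both are restatements of $Q>0$); your remark that this ``bypasses'' the $\arctan\dot r$ estimates and uniform parabolicity is moot, since the paper's proof of this lemma does not use them either.
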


\begin{proof}
For any inflection point $h_*$ of $r$, we have $\ddot{r}(h_*)=0$ and so $Q(h_*)=0$ according to Lemma \ref{lma:Qr''}. Hence, we have
\begin{align*}
\nonumber & r(h_*)\dot{r}_* + h_*=0  \\
\implies & h_* = -r(h_*)\dot{r}_* < 0.
\end{align*}

Next, we prove the existence of such $h_*$.
Suppose otherwise that $\ddot{r}(h) > 0$ (and hence $Q(h) > 0$) on $[h_0, h_{\max})$. Observe that
\begin{align*}
Q(0)= -\dfrac{r(0)\dot{r}(0)+0}{r(0)-\dot{r}(0)\cdot 0} = -\dot{r}(0) < 0.
\end{align*}
If $0 \in [h_0, h_{\max})$, it would contradict to the fact that $Q(h) > 0$ on $[h_0, h_{\max})$. Consequently, we must have $h_{\max} \leq 0$. We will show it is impossible by ruling out all possible blow-up behaviors (i)-(iv) listed below, using similar argument as in the proof of Lemma \ref{lemma:hmin}.
\begin{enumerate}[(i)]
\item $r \to 0$ or $\infty$ as $h \to h_{\max}^-$.
\item $\dot{r} \to \infty$ as $h \to h_{\max}^-$.
\item $r-\dot{r}h \to 0$ or $\infty$ as $h \to h_{\max}^-$.
\item $(1-\dot{r}Q)\hat{\rho}(0) \to 0$ or $\gamma$ as $h \to h_{\max}^-$.
\end{enumerate}
We observe that
\begin{align*}
Q>0 \implies r\dot{r}+h<0 \implies r\dot{r}{<-h\leq-h_0} \quad \mbox{ on } [h_0, h_{\max}),
\end{align*}
we can deduce on $[h_0, h_{\max})$ that:
\begin{align*}
0 < r_0\dot{r}_0 \leq r\dot{r}_0 \leq r\dot{r} < -h_0 & \implies r_0 \leq r < -\dfrac{h_0}{\dot{r}_0} \implies \text{(i) is ruled out}.
\end{align*}
\begin{align*}
0 < r_0\dot{r} \leq r\dot{r} < -h_0 & \implies \dot{r} < -\dfrac{h_0}{r_0} \implies \text{(ii) is ruled out}.
\end{align*}
\begin{align*}
r_0 \leq r \leq r-\dot{r}h \leq r-\dot{r}h_0 \leq -\dfrac{h_0}{\dot{r}_0}+\dfrac{h^2_0}{r_0} \implies \text{(iii) is ruled out}.
\end{align*}
Finally, for (iv) we consider:
\begin{align*}
\dot{r}Q > 0 \implies (1-\dot{r}Q)\hat{\rho}(0) < \hat{\rho}(0)< \gamma,
\end{align*}
and using \eqref{eq:1-r'Q}, we have
\[(1-\dot{r}Q)\hat{\rho}(0) = \frac{r(1+\dot{r}^2)}{r-\dot{r}h}\hat{\rho}(0) \geq \delta \hat{\rho}(0) > 0.\]
where $\delta := \frac{r_0}{-\frac{h_0}{\dot{r}_0}+\frac{h^2_0}{r_0}} > 0$. These rule out (iv).

To summarize, if $\ddot{r} > 0$ on $[h_0, h_{\max})$, then it is necessary that $h_{\max} \leq 0$, but the above argument shows it is not possible. Thus, there exists an inflection point $h_* \in (h_0, 0)$ such that $\ddot{r}(h_*) = 0$.

Lastly we prove that the inflection point is unique. For any inflection point $h_*$, we have $\ddot{r}(h_*) = 0$ and so by \eqref{eq:Q'}, we get:
\begin{align*}
\dot{Q}(h_*) & = -\frac{1+\dot{r}^2}{r-\dot{r}h} \bigg|_{h_*} < 0.
\end{align*}
By Lemma \ref{lma:Qr''}, we have $Q(h_*) = 0$ for any inflection point $h_*$. However, by the continuity of $Q$ on $[h_0, h_{\max})$, it is not possible to have two inflection points $h_*$ and $h_{**}$ such that $Q(h_*) = Q(h_{**}) = 0$, but $Q$ is strictly decreasing at both $h_*$ and $h_{**}$.

Therefore, the inflection point $h_*$ must be unique.
\end{proof}

\begin{lemma}
$h_{\max} = +\infty$.	
\end{lemma}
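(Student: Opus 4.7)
The plan is to mirror the proof of Lemma \ref{lemma:hmin}: suppose for contradiction that $h_{\max}$ is finite, and rule out each of the four obstructions (i)--(iv) that could cause the ODE \eqref{eq:ODE} to break down at $h_{\max}^-$. First I would record the basic behaviour on $(h_*, h_{\max})$: since $h_*$ is the unique inflection point (Lemma \ref{lma:inflectionpoint}), Lemma \ref{lma:Qr''} forces $\ddot{r} < 0$ and $Q < 0$ throughout $(h_*, h_{\max})$. Hence $\dot{r}$ is strictly decreasing while remaining positive, yielding the global bound $0 < \dot{r} \leq \dot{r}_*$ on $[h_0, h_{\max})$, and consequently $r_0 \leq r(h) \leq r_0 + \dot{r}_*(h_{\max} - h_0) < \infty$. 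This immediately disposes of (i) and (ii).

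The main new obstacle, absent from the proof of Lemma \ref{lemma:hmin}, is the possibility $(1-\dot{r}Q)\hat{\rho}(0) \to \gamma$ in (iv): on $(h_*, h_{\max})$ one has $\dot{r}Q < 0$, so $1 - \dot{r}Q > 1$, and a priori this quantity could exit the range of $\hat{\rho}$ in finite time. The key step is to prove the sharp estimate
\[1 - \dot{r}Q < \frac{\hat{\rho}(1)}{\hat{\rho}(0)} \quad \text{on } [h_0, h_{\max}),\]
which uses the hypothesis $\beta > 1$ essentially (so that $\hat{\rho}(1)$ is defined and strictly less than $\gamma$). On $[h_0, h_*]$ the bound is trivial because $\dot{r}Q \geq 0$ there and $\hat{\rho}(1) > \hat{\rho}(0)$ by strict monotonicity of $\hat{\rho}$. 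On $(h_*, h_{\max})$, I argue by contradiction: let $h_1$ be the first value at which $W := 1 - \dot{r}Q$ attains $\hat{\rho}(1)/\hat{\rho}(0)$. The ODE \eqref{eq:ODE2} at $h_1$ forces $-r\ddot{r}/(1+\dot{r}^2) = 1$, i.e. $r(h_1)\ddot{r}(h_1) = -(1+\dot{r}(h_1)^2)$. A direct calculation combining $\dot{W} = -\ddot{r}Q - \dot{r}\dot{Q}$ with \eqref{eq:Q'} (in which, after substituting $r\ddot{r} = -(1+\dot{r}^2)$, the $r\ddot{r}$ terms cancel neatly) simplifies to
\[\dot{W}(h_1) = \frac{(1+\dot{r}(h_1)^2)\,Q(h_1)}{r(h_1) - \dot{r}(h_1)h_1} < 0,\]
since $Q(h_1) < 0$ and $r - \dot{r}h > 0$ (established below). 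This contradicts the first-time property $\dot{W}(h_1) \geq 0$ and establishes the bound; consequently $(1-\dot{r}Q)\hat{\rho}(0) \leq \hat{\rho}(1) < \gamma$, ruling out this branch of (iv). This is the step I expect to be the main obstacle, since it is the one that did not arise in \cite{DLW} and is the reason one needs the extra hypothesis $\beta > 1$.

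With the sharp bound in hand, the remaining cases become routine. For (iii) $r - \dot{r}h \to 0$: on $[h_0, 0]$ we have $-\dot{r}h \geq 0$ so $r - \dot{r}h \geq r \geq r_0 > 0$; and on $(0, h_{\max})$ (if $h_{\max} > 0$), the identity $\tfrac{d}{dh}(r - \dot{r}h) = -\ddot{r}h$ together with $\ddot{r} < 0$ past $h_*$ shows $r - \dot{r}h$ is non-decreasing there, hence bounded below by $r(0) \geq r_0$. The upper bound for (iii) follows from boundedness of $r$, $\dot{r}$ and $h$. For (iv) with $\to 0$: since $1 - \dot{r}Q = r(1+\dot{r}^2)/(r - \dot{r}h)$, the lower bound $r \geq r_0$ and the upper bound on $r - \dot{r}h$ just obtained give $(1-\dot{r}Q)\hat{\rho}(0) \geq \hat{\rho}(0) r_0 / \sup(r - \dot{r}h) > 0$. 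All four obstructions being excluded, we conclude $h_{\max} = +\infty$.
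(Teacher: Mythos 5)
Your proposal is correct and follows essentially the same route as the paper: the same four blow-up alternatives, the same use of $\ddot{r}<0$ past the unique inflection point to bound $r$, $\dot{r}$ and $r-\dot{r}h$, and the same key step of showing $(1-\dot{r}Q)\hat{\rho}(0)$ can never reach $\hat{\rho}(1)<\gamma$ by computing the sign of $\frac{d}{dh}(\dot{r}Q)$ at a hypothetical first crossing (your explicit evaluation $\dot{W}(h_1)=\frac{(1+\dot{r}^2)Q}{r-\dot{r}h}<0$ is exactly what the paper's inequality \eqref{eq:(r'Q)'_inequality} yields there). No gaps.
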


\begin{proof}
Suppose on the contrary that $h_{\max} < \infty$. Similar to the proof of $h_{\min} = -\infty$, we need to rule all possibilities below to get a contradiction:
\begin{enumerate}[(i)]
\item $r \to 0$ or $\infty$ as $h \to h_{\max}^-$.
\item $\dot{r} \to \infty$ as $h \to h_{\max}^-$.
\item $r-\dot{r}h \to 0$ or $\infty$ as $h \to h_{\max}^-$.
\item $(1-\dot{r}Q)\hat{\rho}(0) \to 0$ or $\gamma$ as $h \to h_{\max}^-$.
\end{enumerate}
To rule out (i)-(iii), one crucial observation is that $\ddot{r} < 0$ on $(h_*, h_{\max})$ by the uniqueness of the inflection point due to Lemma \ref{lma:inflectionpoint}. It follows that $\dot{r} \leq \dot{r}(h_*)$ on $[h_*, h_{\max})$. This rules out (ii). Combining with the fact that $\dot{r} > 0$, we can show on $(h_*, h_{\max})$ that:
\begin{align*}
\nonumber & r(h)-r(h_*) = \int_{h_*}^h \dot{r} \leq \dot{r}(h_*)\cdot(h - h_*) \leq \dot{r}(h_*)\cdot(h_{\max} - h_*)\\
\implies & r(h_*) \leq r(h) \leq r(h_*) + \dot{r}(h_*)\cdot(h_{\max} - h_*).
\end{align*}
These rule out (i). 

On $(h_*, 0)$, since $\dfrac{d}{dh}(r-\dot{r}h)=-\ddot{r}h < 0$, we obtain
\[r(h_*)-\dot{r}(h_*)h_* \geq r-\dot{r}h > r \geq r(h_*) > 0,\]
and on $[0, h_{\max})$, since $\dfrac{d}{dh}(r-\dot{r}h)=-\ddot{r}h\geq 0$, we obtain
\[r \geq r-\dot{r}h \geq (r-\dot{r}h)\big|_{h=0} = r(0) > 0.\]
Since $r$ has been shown to be uniformly bounded on $(h_*, h_{\max})$ whenever $h_{\max}$ is finite, the above shows $r-\dot{r}h$ is also uniformly bounded on $(h_*, h_{\max})$. This rules out (iii).

Finally we rule out (iv). The positive lower bound of $(1-\dot{r}Q)\hat{\rho}(0)$ follows from the uniform bounds derived above:

\begin{align*}
(1-\dot{r}Q)\hat{\rho}(0) = \dfrac{r(1+\dot{r}^2)}{r-\dot{r}h}\hat{\rho}(0) \geq \delta \hat{\rho}(0) >0,
\end{align*}
where $\delta = \dfrac{r(h_*)}{\max\{r(h_*)-\dot{r}(h_*)h_*, r(h_{\max})\}}$.

The more challenging part is to show that $(1-\dot{r}Q)\hat{\rho}(0)$ is bounded away from $\gamma := \hat{\rho}_{\max}$. We consider the quantity $\dot{r}Q$. By direct computations and from \eqref{eq:Q'}, we get:
\begin{equation}
\label{eq:(r'Q)'}
\frac{d}{dh}\dot{r}Q = \dot{r}\dot{Q} + \ddot{r}Q = -\dfrac{\dot{r}(1+\dot{r}^2)}{r-\dot{r}h} \left(1 + \dfrac{r\ddot{r}}{1+\dot{r}^2}\right) + \ddot{r}Q \cdot \dfrac{r}{r - \dot{r}h}.
\end{equation}

From Lemma \ref{lma:Qr''} that $Q$ and $\ddot{r}$ always have the same sign, we have $\ddot{r}Q \geq 0$. Recall that from the above discussion that we also have $r-\dot{r}h > 0$ and $r > 0$ on $[h_*, h_{\max})$, so \eqref{eq:(r'Q)'} shows
\begin{equation}
\label{eq:(r'Q)'_inequality}
\frac{d}{dh}\dot{r}Q \geq -\dfrac{\dot{r}(1+\dot{r}^2)}{r-\dot{r}h} \left(1 + \dfrac{r\ddot{r}}{1+\dot{r}^2}\right) \quad \text{ on $[h_*, h_{\max})$}.
\end{equation}

By rearranging the ODE \eqref{eq:ODE}, we get:
\begin{align*}
-\dfrac{r\ddot{r}}{1+\dot{r}^2} = \hat{\rho}^{-1}\bigg((1-\dot{r}Q)\hat{\rho}(0)\bigg).
\end{align*}
We are going to argue that the $(1-\dot{r}Q)\hat{\rho}(0)$ cannot be greater than $\hat{\rho}(1)$. Suppose there exists $h_1 \in [h_*, h_{\max})$ such that $[1-\dot{r}(h_1)Q(h_1)]\hat{\rho}(0) =  \hat{\rho}(1)$, and equivalently we have: $-\dfrac{r\ddot{r}}{1+\dot{r}^2}\bigg|_{h=h_1} = 1$. However, then \eqref{eq:(r'Q)'_inequality} implies
\[\frac{d}{dh}\bigg|_{h=h_1} \dot{r}Q > 0,\]
showing that $(1-\dot{r}Q)\hat{\rho}(0)$ is strictly decreasing at $h = h_1$. Since $(1-\dot{r}Q)\hat{\rho}(0)$ decreases whenever it reaches $\hat{\rho}(1)$. Combining with the fact that $(1-\dot{r}Q)\hat{\rho}(0) = \hat{\rho}(0) < \hat{\rho}(1)$ at $h = h_*$, we conclude that:
\begin{align*}
(1-\dot{r}Q)\hat{\rho}(0) < \hat{\rho}(1) < \gamma.
\end{align*}
Therefore, $(1-\dot{r}Q)\hat{\rho}(0)$ stays away from $0$ and $\gamma$. This rules out (iv).

Therefore, none of the blow-up behaviors (i)-(iv) can occur in $[h_*, h_{\max})$ if $h_{\max}$ were finite, we conclude that $h_{\max}=+\infty$.
\end{proof}

\begin{lemma}
\label{lma:r<K}
There exists $K < \infty$ such that $r(h) \leq K$ on $[h_0, \infty)$.
\end{lemma}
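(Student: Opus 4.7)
The plan is to chain two differential inequalities. The first extracts sub-linear growth of $r$ from the bound $-\dot{r}Q < \hat{\rho}(1)/\hat{\rho}(0) - 1$ established in the proof of the preceding lemma. The second, invoking the uniform parabolicity hypothesis $\tfrac{\partial\hat{\rho}}{\partial\xi} \leq C$, is combined with the sub-linear growth of $r$ to force super-exponential decay of $\dot{r}$, making $\dot{r}$ integrable on $[1,\infty)$ and hence bounding $r$.

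For the sub-linear growth, set $K_0 := \hat{\rho}(1)/\hat{\rho}(0) - 1 > 0$. The preceding lemma's estimate $(1-\dot{r}Q)\hat{\rho}(0) < \hat{\rho}(1)$ on $[h_*,\infty)$ rearranges to $-\dot{r}Q < K_0$. Substituting $Q = -(r\dot{r}+h)/(r-\dot{r}h)$ and cross-multiplying yields $r\dot{r}^2 + (1+K_0)h\dot{r} < K_0 r$, which on $(h_*,\infty)\cap(0,\infty)$ forces $\dot{r}h/r < \kappa := K_0/(1+K_0) \in (0,1)$. Integrating $(\log r)' < \kappa/h$ from $1$ to $h$ gives
\[
  r(h) \leq r(1)\, h^{\kappa} \qquad \text{for all } h \geq 1.
\]

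For the decay of $\dot{r}$, I rewrite \eqref{eq:ODE2} as $\hat{\rho}(y) - \hat{\rho}(0) = -\dot{r}Q\,\hat{\rho}(0)$ with $y := -r\ddot{r}/(1+\dot{r}^2) > 0$ on $(h_*,\infty)$. The bound $\tfrac{\partial\hat{\rho}}{\partial\xi} \leq C$ gives $\hat{\rho}(y) - \hat{\rho}(0) \leq Cy$, hence
\[
  -\ddot{r} \geq \frac{(1+\dot{r}^2)\,\hat{\rho}(0)\,(-\dot{r}Q)}{Cr}.
\]
The algebraic identity $-\dot{r}Q - \dot{r}h/r = \dot{r}^2(r^2+h^2)/[r(r-\dot{r}h)] > 0$ on $(h_*,\infty)$ gives $-\dot{r}Q \geq \dot{r}h/r$, so combining with $1+\dot{r}^2 \geq 1$ yields
\[
  -\frac{d}{dh}\log\dot{r} \geq \frac{\hat{\rho}(0)\, h}{C\, r(h)^2}.
\]
Integrating on $[1,h]$ and plugging in $r(s) \leq r(1) s^{\kappa}$ from the previous step gives $\int_1^h s/r(s)^2\, ds \geq c_1(h^{2-2\kappa} - 1)$ for some $c_1 > 0$, whence $\dot{r}(h) \leq \dot{r}(1)\exp(-c\,(h^{2-2\kappa} - 1))$ for $h \geq 1$ and some $c > 0$. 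Since $2-2\kappa > 0$, $\dot{r}$ is integrable on $[1,\infty)$, so $r(h) \leq r(1) + \dot{r}(1)\int_1^\infty e^{-c(s^{2-2\kappa}-1)}\,ds < \infty$. Continuity of $r$ on $[h_0, 1]$ then yields a uniform upper bound $K$ on all of $[h_0,\infty)$.

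The main subtlety is that the direct estimate $\hat{\rho}^{-1}((1-\dot{r}Q)\hat{\rho}(0)) < 1$ (equivalent to $-\ddot{r} < (1+\dot{r}^2)/r$) alone permits $r$ to grow at most linearly, which is insufficient. The key is the two-step bootstrap above: first extract sub-linear growth of $r$ from the finer bound $-\dot{r}Q < K_0$, then insert this into the lower bound for $-\ddot{r}$ coming from uniform parabolicity to amplify it into super-exponential decay of $\dot{r}$, which finally integrates to bound $r$.
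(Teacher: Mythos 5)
Your proof is correct, but it takes a genuinely different route from the paper's. The paper argues by contradiction: assuming $r\to+\infty$, it applies L'H\^opital's rule to get $\lim r/h=\lim\dot r$, deduces from $r-\dot rh>0$ that $Q\le-\varepsilon<0$ uniformly on $[0,\infty)$, and then uses the mean value theorem together with $\partial\hat\rho/\partial\xi\le C$ to show that $\arctan\dot r+\frac{\varepsilon\hat\rho(0)}{C}\ln r$ is non-increasing, which bounds $\ln r$ and contradicts $r\to\infty$. You instead give a direct quantitative bootstrap: the barrier estimate $(1-\dot rQ)\hat\rho(0)<\hat\rho(1)$ yields sublinear growth $r\le r(1)h^{\kappa}$ with $\kappa<1$; the exact identity $-\dot rQ-\dot rh/r=\dot r^2(r^2+h^2)/[r(r-\dot rh)]>0$ replaces the paper's L'H\^opital/compactness step as the source of a lower bound on $-\dot rQ$; and feeding the sublinear growth back into the MVT inequality gives $\dot r\lesssim e^{-ch^{2-2\kappa}}$, hence integrability of $\dot r$ and a finite limit for $r$. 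Both proofs hinge on the same core mechanism --- MVT plus the uniform parabolicity $\partial\hat\rho/\partial\xi\le C$ converting \eqref{eq:ODE2} into a differential inequality relating $\ddot r$, $\dot r$, $r$ --- but yours avoids the contradiction structure and yields an explicit decay rate for $\dot r$, at the price of needing the finer bound $-\dot rQ<\hat\rho(1)/\hat\rho(0)-1$. One point you should make explicit: in the paper that bound is derived inside a proof by contradiction on $[h_*,h_{\max})$ with $h_{\max}$ assumed finite; the first-crossing argument does transfer verbatim to $[h_*,\infty)$ once $h_{\max}=+\infty$ is known (it only uses $r>0$, $\dot r>0$, $r-\dot rh>0$ and $\ddot rQ\ge0$), but this needs to be said rather than cited as already established on the infinite interval.
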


\begin{proof}
In \cite{DLW} on the inverse mean curvature flow, the authors showed $r$ is uniformly bounded from above by considering $h$ is a function of $r$, and then show $h(r)$ must blow-up when $r$ is finite. Here we will show $r$ is bounded from above directly.

Recall that $r$ is strictly increasing on $(-\infty, \infty)$, so $\displaystyle{\lim_{h\to +\infty}r}$ must exist or it is equal to $+\infty$. Suppose $\displaystyle{\lim_{h\to +\infty}r = +\infty}$, then by L'H\^opital's rule, 
\begin{align*}
\lim_{h\to +\infty}\dfrac{r}{h} = \lim_{h\to +\infty}\dfrac{\dot{r}}{1} =: \dot{r}_{\infty} \geq 0.
\end{align*}
The existence of the limit $\dot{r}_\infty$ is guaranteed by the monotonicity of $\dot{r}$ (recall that $\dot{r} > 0$ and $\ddot{r} < 0$ on $[h_*, +\infty)$). As $\frac{r}{h}$ and $\dot{r}$ approach the same finite limit as $h \to +\infty$, and $r-\dot{r}h > 0$ on $[0, \infty)$, we have $0 < \frac{r}{h} - \dot{r} < 1$ for sufficiently large $h \gg 1$. This shows:
\[Q = \dfrac{-(r\dot{r}+h)}{r-\dot{r}h} = \dfrac{-(\frac{r}{h}\dot{r}+1)}{\frac{r}{h}-\dot{r}} < -\left(\frac{r}{h}\dot{r} + 1\right) < -1\]
for sufficiently large $h \gg 1$. Combining with the fact that $Q<0$ on $[0,\infty)$ (since $\ddot{r} < 0$ on $[0,\infty)$), by compactness there exists $\varepsilon > 0$, independent of $h$, such that $Q \leq -\varepsilon < 0$ on $[0,\infty)$. Then, from \eqref{eq:ODE2}, we have
\begin{align*}
\hat{\rho}\left(-\dfrac{r\ddot{r}}{1+\dot{r}^2}\right)- \hat{\rho}(0) = -\hat{\rho}(0)\dot{r}Q \geq \varepsilon\hat{\rho}(0)\dot{r}.
\end{align*}
Applying mean value theorem to the LHS, there exists $\varsigma \in (0, -\frac{r\ddot{r}}{1+\dot{r}^2})$ such that
\begin{align*}
C\dfrac{-r\ddot{r}}{1+\dot{r}^2} & \geq \dfrac{\partial \hat{\rho}}{\partial \xi}(\varsigma)\left(\dfrac{-r\ddot{r}}{1+\dot{r}^2}\right)  = \hat{\rho}\left(-\dfrac{r\ddot{r}}{1+\dot{r}^2}\right)- \hat{\rho}(0)   \geq \varepsilon\hat{\rho}(0)\dot{r}.
\end{align*}
Then, we have
\begin{align*}
\dfrac{d}{dh}\left(\arctan \dot{r} + \dfrac{\varepsilon\hat{\rho}(0)}{C}\ln r\right) = \dfrac{\ddot{r}}{1+\dot{r}^2} + \dfrac{\varepsilon\hat{\rho}(0)}{C}\dfrac{\dot{r}}{r} \leq 0,
\end{align*}
on $[0, \infty)$ which implies
\begin{align*}
\nonumber & \arctan \dot{r} + \dfrac{\varepsilon\hat{\rho}(0)}{C}\ln r \leq \arctan \dot{r(0)} + \dfrac{\varepsilon\hat{\rho}(0)}{C}\ln r(0) =:C_2  \\
\implies & r \leq \exp\left(\frac{C(C_2+\frac{\pi}{2})}{\varepsilon\hat{\rho}(0)}\right).
\end{align*}
We have used the fact that $-\arctan \dot{r} \leq \dfrac{\pi}{2}$. However, it contradicts to our assumption that $\displaystyle{\lim_{h\to+\infty}r = +\infty}$. Therefore, we have
\[\lim_{h \to +\infty} r = K < \infty,\] 
and by $\dot{r} > 0$, we conclude that $r(h) \leq K$ on $(-\infty, \infty)$.
\end{proof}
Combining results from Lemma \ref{lemma:noinflectionpoint} to Lemma \ref{lma:r<K}, we have proved Theorem \ref{thm:InfiniteBottle}. This shows for a large class of uniformly parabolic inverse $\rho$-flows, non-compact self-expanders are not unique even with the same topological type (i.e. infinite cylinder).

\providecommand{\bysame}{\leavevmode\hbox to3em{\hrulefill}\thinspace}
\renewcommand{\MR}[1]{MR #1}
%\providecommand{\MR}{\relax\ifhmode\unskip\space\fi MR }
% \MRhref is called by the amsart/book/proc definition of \MR.
\providecommand{\MRhref}[2]{%
  \href{http://www.ams.org/mathscinet-getitem?mr=#1}{#2}
}
\providecommand{\href}[2]{#2}

\end{document}